\documentclass[12pt,a4paper,reqno]{amsart}

\usepackage[T1]{fontenc}
\usepackage{amsmath,amssymb,amsthm,mathtools}
\usepackage{enumitem}
\usepackage[a4paper,
left=1in,right=1in,top=1.2in,bottom=1.2in,%
footskip=.25in]{geometry}
\usepackage[colorlinks=false,pdfborderstyle={/S/U/W 0}]{hyperref}
\usepackage{comment}

\usepackage{parskip}
\theoremstyle{plain}
\newtheorem{theorem}{Theorem}[section]
\newtheorem{lemma}[theorem]{Lemma}
\newtheorem{proposition}[theorem]{Proposition}
\newtheorem{corollary}[theorem]{Corollary}

\theoremstyle{remark}
\newtheorem{remark}[theorem]{Remark}

\newtheorem*{acknowledgements}{Acknowledgements}

\numberwithin{equation}{section}
\setlist[itemize]{leftmargin=*}

\newcommand{\cA}{\mathcal{A}}
\newcommand{\cC}{\mathcal{C}}
\newcommand{\cL}{\mathcal{L}}

\newcommand{\cR}{\mathcal{R}}
\newcommand{\cS}{\mathcal{S}}

\newcommand{\dd}{\mathrm{d}}
\newcommand{\SO}{\mathrm{SO}}
\newcommand{\R}{\mathbb{R}}

\DeclareMathOperator{\Ric}{\mathrm{Ric}}
\DeclareMathOperator{\Tr}{\mathrm{Tr}}
\DeclareMathOperator{\Hess}{\mathrm{Hess}}

\title{Cylindrical Generalized Ricci Solitons in Three Dimensions}

\author[Miguel Pino Carmona]{Miguel Pino Carmona} 
\address{Department of Mathematics, Universidad Nacional de Educación a Distancia, Spain}
\email{mpino185@alumno.uned.es}

\date{}

\begin{document}

\begin{abstract}
We construct an explicit two-parameter family of complete, non-compact, three-dimensional, smooth steady gradient generalized Ricci solitons with $\SO(2)\times\R$ symmetry, providing a cylindrical counterpart to the spherically symmetric solitons recently found by Podestà and Raffero. The family is parametrized by a flux constant $k>0$ and a conserved quantity $\cC\ge 0$. For $\cC=0$, the asymptotic geometry exhibits power-law decay; for $\cC>0$, the metric converges exponentially fast to a flat cylinder of finite radius.
\end{abstract}

\maketitle
\setcounter{tocdepth}{1} 
\tableofcontents

\section{Introduction}
\label{sec:introduction}

Let $(M, g)$ be an oriented Riemannian manifold and let $H$ be a closed 3-form on $M$. The pair $(g, H)$ is called a \emph{steady gradient generalized Ricci soliton} \cite{GarciaFernandezStreets2021,PodestaRaffero2025} if there exists a function $f \in C^\infty(M)$ such that
\begin{align}
\Ric^g + \Hess^g f - \frac{1}{2}H \circ_g H &= 0 \,, \label{eq:einsteinGSS} \\
\Delta^g H + \cL_{\nabla f} H &= 0 \,. \label{eq:maxwellGSS}
\end{align}

These are the self-similar steady gradient solutions to the \emph{generalized Ricci flow} \cite{GarciaFernandezStreets2021}, a generalization of Hamilton's classical steady gradient Ricci flow \cite{Hamilton1995}
\begin{equation*}
\Ric^g + \Hess^g f = 0    
\end{equation*}

including the coupling with the closed $3$-form $H$. By analogy with the nomenclature in the supergravity literature, we refer to Equations \eqref{eq:einsteinGSS} and \eqref{eq:maxwellGSS} as the \emph{Einstein} and \emph{Maxwell} equations, respectively.

Here, $\Hess^g=\nabla^g\dd$ is the Hessian of $g$, $\nabla^g$ the Levi-Civita connection of $g$, $\Ric^g$ is its Ricci tensor\footnote{In our conventions, the Riemann tensor is defined by
\begin{equation*}
\cR^g_{v_1,v_2} v_3 = \nabla^g_{v_1} \nabla^g_{v_2} v_3 - \nabla^g_{v_2} \nabla^g_{v_1} v_3 - \nabla^g_{[v_1,v_2]} v_3 \,, \qquad v_1, v_2, v_3\in \mathfrak{X}(M) \,.
\end{equation*}
Hence, in an orthonormal frame $\{e_1,e_2,e_3\}$, the Ricci tensor $\Ric^g$ is defined by:
\begin{equation*}
\Ric^g(v_1,v_2) = \sum_i^3 g(\cR^g_{e_i,v_1}v_2,e_i) \,, \qquad v_1,v_2 \in \mathfrak{X}(M) \,.
\end{equation*}}, $s_g$ is its scalar curvature, $\delta^g$ is the formal adjoint of the exterior derivative $\dd$, $\Delta^g=\delta^g\dd + \dd\delta^g$ is the Hodge laplacian, $\iota$ is the interior product, and $H \circ_g H$, $|H|_g^2$ are given by:
\begin{gather*}
(H\circ_g H)(v_1,v_2) = \langle\iota_{v_1}H,\iota_{v_2}H\rangle_g = \frac{1}{2}\sum_{i,j,k=1}^3 H(v_1,e_j,e_k) H(v_2,e_j, e_k) \,, \\
\vert H \vert_g^2 = \frac{1}{3}\sum_{i=1}^3 (H\circ_g H)(e_i,e_i) = \frac{1}{6}\sum_{i,j,k=1}^3 H(e_i,e_j,e_k) \, H(e_i,e_j,e_k)    
\end{gather*}

in terms of any local orthonormal frame $\{e_i\}_{i=1}^n$ and vectors $v_1,v_2 \in \mathfrak{X}(M)$, with $\langle\,,\rangle_g$ the induced inner product on $k$-forms, whose corresponding norm is $\vert\cdot\vert_g^2$.

Despite their geometric and physical importance, examples of non‑trivial steady gradient generalized Ricci solitons are scarce, let alone explicit ones. Recently, Podestà and Raffero \cite{PodestaRaffero2025} constructed the first known family of complete solitons. These solitons possess $\SO(3)$ (spherical) symmetry, hence they exhibit positive sectional curvature, and thus can be interpreted as a natural generalization of the Bryant soliton \cite{Bryant2005} within the generalized framework.

In this note, we expand this landscape and construct a two-parameter family of smooth complete steady gradient generalized Ricci solitons on $\R^3$ with $\SO(2)\times \R$ (cylindrical) symmetry. More explicitly, we fix cylindrical coordinates $(t,\theta,z)$ on $\R^3$ and set the metric
\begin{equation}\label{eq:metric}
g = dt \otimes dt + \phi(t)^2 \,d\theta \otimes d\theta + \psi(t)^2 \,dz \otimes dz \,, \qquad \phi, \psi \in C^\infty(M) \,, \quad \phi, \psi > 0 \,,
\end{equation}
the 3-form
\begin{equation}\label{eq:H}
H = h(t) \,dt \wedge d\theta \wedge dz, \quad h \in C^\infty(M) \,, 
\end{equation}
and the function $f=f(t)$, and solve Equations \eqref{eq:einsteinGSS}--\eqref{eq:maxwellGSS} for this ansatz. In particular, solving the Maxwell equation \eqref{eq:maxwellGSS} gives
\begin{equation*}
H = k e^f \sqrt{\det g} \, dt \wedge d\theta \wedge dz \,, \qquad k \in \R\setminus \{0\} \,.
\end{equation*}

Additionally, taking the divergence of \eqref{eq:einsteinGSS} shows that a soliton has an associated conserved quantity $\cC$ (see Proposition \ref{prop:sgsolitonsaresgrs}), thus the problem reduces to three unknown functions $(\phi,\psi,f)$ and two parameters $k$, $\cC$. We then prove the following explicit classification result.

\begin{theorem}\label{thm:main}
For every $k>0$, $\cC\ge0$, there exists a unique smooth complete gradient generalized Ricci soliton $(\phi, \psi, f)$ within the class of diagonal cylindrical metrics and functions, given by

\begin{equation*}
\phi(t) = \frac{t}{\sqrt{1 + (kt/2)^2}} \,, \qquad 
\psi(t) = \frac{1}{\sqrt{1 + (kt/2)^2}} \,, \qquad 
f(t) = - \ln\big[ 1 + (kt/2)^2 \big]
\end{equation*}

if $\cC=0$, and

\begin{equation*}
\begin{gathered}
\phi(t) = \frac{2}{\sqrt{\cC}}\,\frac{\tanh(\sqrt{\cC}\,t/2)}{\sqrt{1 + \frac{k^{2}}{\cC}\tanh^{2}(\sqrt{\cC}\,t/2)}} \,, \qquad \psi(t) = \frac{1}{\sqrt{1 + \frac{k^{2}}{\cC}\tanh^{2}(\sqrt{\cC}\,t/2)}} \, \\[6pt]
f(t) = \ln\left[\frac{1}{2}\left(1-\frac{\cC}{k^{2}}\right) + \frac{1}{2}\left(1+\frac{\cC}{k^{2}}\right)\frac{1 - \frac{k^{2}}{\cC}\tanh^{2}(\sqrt{\cC}\,t/2)}{1 + \frac{k^{2}}{\cC}\tanh^{2}(\sqrt{\cC}\,t/2)}
\right] \,.
\end{gathered}
\end{equation*}

if $\cC>0$. In particular, the $\cC>0$ case converges to the $\cC=0$ case when $\cC\to 0$.
\end{theorem}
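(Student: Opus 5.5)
We reduce the soliton equations to ODEs in $t$ for the ansatz \eqref{eq:metric}--\eqref{eq:H}, integrate them using the Maxwell equation and the conserved quantity $\cC$ of Proposition \ref{prop:sgsolitonsaresgrs}, and then impose smoothness at the axis $t=0$ and completeness.

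\textbf{Step 1: the ODE system.} For $g=dt^2+\phi^2d\theta^2+\psi^2dz^2$ the Ricci tensor is diagonal in the frame $\{\partial_t,\phi^{-1}\partial_\theta,\psi^{-1}\partial_z\}$, with components
\begin{equation*}
\Ric_{tt}=-\frac{\phi''}{\phi}-\frac{\psi''}{\psi},\qquad \Ric_{\theta\theta}=-\frac{\phi''}{\phi}-\frac{\phi'\psi'}{\phi\psi},\qquad \Ric_{zz}=-\frac{\psi''}{\psi}-\frac{\phi'\psi'}{\phi\psi}.
\end{equation*}
The Hessian of $f$ has components $f''$, $f'\phi'/\phi$ and $f'\psi'/\psi$. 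Since $H$ is top degree, $H\circ_g H=|H|^2g$, and the Maxwell solution quoted in the introduction gives $|H|^2=k^2e^{2f}$. The Einstein equation \eqref{eq:einsteinGSS} therefore becomes three second order ODEs. Proposition \ref{prop:sgsolitonsaresgrs} supplies a first integral of the form
\begin{equation*}
\Delta f-|\nabla f|^2+\tfrac{1}{2}|H|^2\text{-type terms}=\text{const},
\end{equation*}
and this constant is identified with $\cC$ after a normalization.

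\textbf{Step 2: integration.} Set $u=e^{-f}$ and $w=\phi\psi$, so that $\sqrt{\det g}=w$. Subtracting the $\theta\theta$ and $zz$ equations gives
\begin{equation*}
\big(w e^{-f}(\phi'/\phi-\psi'/\psi)\big)'=0 .
\end{equation*}
Smoothness at the axis forces $\phi(0)=0$, $\phi'(0)=1$, $\psi(0)>0$ and $\psi'(0)=0$. Evaluating at $t=0$ then fixes the integration constant; it is $e^{-f(0)}\psi(0)^2$. Using the scaling of $z$ and the additive freedom in $f$, normalize $\psi(0)=1$ and $f(0)=0$. The $zz$ equation then reads $(e^{-f}w\psi'/\psi)'=\ldots$. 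I expect a substitution such as $\psi^{-2}=1+\ldots$ to linearize the system, because the claimed solutions satisfy $\psi^{-2}=e^{-f}$ when $\cC=0$, and in general they satisfy $\phi/\psi=\frac{2}{\sqrt{\cC}}\tanh(\sqrt{\cC}t/2)$. Accordingly, I would first look for the relation between $\phi/\psi$ and $t$: the quantity $\rho=\phi/\psi$ should satisfy $\rho'=1-\tfrac{\cC}{4}\rho^2$, which comes from the conserved quantity. I would next derive an algebraic relation between $\psi$ and $\rho$, namely $\psi^{-2}=1+\tfrac{k^2}{4}\rho^2$. Finally $f$ would follow from the $tt$ equation or from the first integral.

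\textbf{Step 3: uniqueness, completeness and the limit.} The regular-axis initial value problem has a singular point at $t=0$. Uniqueness follows from standard results for ODEs with a regular singular point of the type used for the Bryant soliton, with the data $\phi'(0)=1$ and $\psi(0)=1$ after normalization. Alternatively, uniqueness can be read off directly from the first-order reductions in Step 2, which pin down every integration constant. For completeness, the solutions exist for all $t\ge0$ with $\phi,\psi>0$ for $t>0$. The functions $\phi$ and $\psi$ are odd and even respectively and extend smoothly through the axis. The profiles are bounded, so the metric is complete because $t$ is arclength. The limit $\cC\to0$ follows from $\tfrac{2}{\sqrt{\cC}}\tanh(\sqrt{\cC}t/2)\to t$.

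\textbf{Main obstacle.} The hard part is Step 2: finding the right combination of unknowns that turns the coupled nonlinear system into first-order equations that can be integrated explicitly. I would guide the search with the target formulas by verifying that $\rho=\phi/\psi$ and $\psi^{-2}$ satisfy closed equations, and then derive those equations honestly from the conserved quantities.
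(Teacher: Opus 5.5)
\textbf{Step 2 is never carried out, and that integration is the whole content of the theorem.} Your Step 1 is fine. The one first integral you actually derive, $\bigl(e^{-f}(\phi'\psi-\phi\psi')\bigr)'=0$, is correct and is exactly the paper's relation $\dot\mu=\mu/\cA$ for $\mu=\phi/\psi$. (The constant is $e^{-f(0)}\psi(0)$, not $e^{-f(0)}\psi(0)^2$; this is harmless after normalization, and normalizing $\psi(0)$ by rescaling $z$ is legitimate.) Your target identities $\rho'=1-\tfrac{\cC}{4}\rho^2$ and $\psi^{-2}=1+\tfrac{k^2}{4}\rho^2$ are also true. However, they are read off from the formulas to be proved, so the proposal has a genuine gap.

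The missing idea is the reparametrization $\tau=\int_0^t e^{f(s)}\,ds$, i.e.\ $d/d\tau=e^{-f}d/dt$.
\begin{itemize}
\item Add the $\theta\theta$ and $zz$ equations, multiplied by $\phi\psi$. This gives $(e^{-f}\cA')'+k^2e^{f}\cA=0$ for $\cA=\phi\psi$. In $\tau$ this is the harmonic oscillator $\ddot{\cA}+k^2\cA=0$ with $\cA(0)=0$, $\dot{\cA}(0)=1$, so $\cA=\sin(k\tau)/k$.
\item Combined with your first integral, this yields $\phi=\tfrac2k\sin(k\tau/2)$ and $\psi=\cos(k\tau/2)$. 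That is your relation $\psi^2+\tfrac{k^2}{4}\phi^2=1$, and it is independent of $\cC$.
\item The $f''$ equation \eqref{eq:f''} then becomes linear in $y=e^f$: $\ddot y-k\cot(k\tau)\dot y=0$. Hence $y=A+D\cos(k\tau)$ with $A+D=1$.
\item Only now does the dilaton identity \eqref{eq:conservationlawreduced} reduce to the algebraic condition $k^2(A^2-D^2)=-\cC$. Note its coefficient on $|H|^2=k^2e^{2f}$ is $1$, not $\tfrac12$.
\item This gives $e^f=\psi^2-\tfrac{\cC}{4}\phi^2$, which is your $\rho'=1-\tfrac{\cC}{4}\rho^2$ because $\rho'=e^f/\psi^2$.
\item Integrating $dt=d\tau/y$ gives $\tan(k\tau/2)=\tfrac{k}{\sqrt{\cC}}\tanh(\sqrt{\cC}\,t/2)$, or $kt/2$ when $\cC=0$. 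Substituting produces the stated formulas.
\end{itemize}

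\textbf{Your uniqueness argument in Step 3 fails as stated.} The normalized singular initial value problem $\phi(0)=0$, $\phi'(0)=1$, $\psi(0)=1$, $\psi'(0)=0$, $f(0)=0$, $f'(0)=0$ does not have a unique solution. The equation $\ddot y=k\cot(k\tau)\dot y$ forces $\dot y(0)=0$ automatically and leaves $\ddot y(0)=-Dk^2$ free, equivalently $f''(0)$. So for each $k$ there is a one-parameter family of smooth solutions. Singular-point theory with the data $\phi'(0)=1$, $\psi(0)=1$ alone therefore cannot give uniqueness. You must show that the extra free datum is fixed by $\cC$; in the paper this falls out of the explicit integration as $D=\tfrac12(1+\cC/k^2)$.

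\textbf{Your completeness criterion is also the wrong one.} Boundedness of the profiles plays no role. What you need is that $t$ exhausts $[0,\infty)$ along the maximal $\tau$-interval, i.e.\ $\int_0^{\tau_{\max}}d\tau/y=\infty$. This holds for $\cC\ge0$ because $y$ vanishes at $\tau_{\max}$, and fails for $\cC<0$. You also need a positive lower bound for $\psi$ on $\{t\le R\}$, so that metric balls have bounded $z$.
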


In particular, the geometry of the solutions exhibits different behaviors with the value of $\cC$. Define the area of the unit height cylinder by $\cS = 2\pi \phi\psi$, i.e. the angular integral of the square root of the determinant of the metric.

\begin{corollary}\label{cor:cuspend}
The asymptotic geometry of the diagonal cylindrical steady gradient generalized Ricci solitons of Theorem \ref{thm:main} depends on the dilatonic constant $\cC$.
\begin{enumerate}[label=(\roman*), leftmargin=*]
\item If $\cC=0$, the soliton exhibits power‑law decay: the area density of a principal orbit decays as $8\pi/(k^{2}t)$, while the scalar curvature decays as $-4/t^2$.
\item If $\cC>0$, the metric converges exponentially fast to a flat cylinder of finite radius; the area density tends to $4\pi\sqrt{\cC}/(\cC+k^{2})$, and all sectional curvatures (hence the scalar curvature) decay exponentially.
\end{enumerate}
\end{corollary}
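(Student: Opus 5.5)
The statement to prove is Corollary \ref{cor:cuspend}. Both parts follow by direct asymptotic analysis of the explicit solutions in Theorem \ref{thm:main}, so we take that theorem as given.

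\textbf{Area density.} Multiplying the formulas for $\phi$ and $\psi$ gives the area density $\cS=2\pi\phi\psi$ in each case.

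For $\cC=0$ we have
\begin{equation*}
\cS(t)=\frac{2\pi t}{1+(kt/2)^2}.
\end{equation*}
As $t\to\infty$ this behaves like $2\pi t\cdot 4/(k^2t^2)=8\pi/(k^2 t)$.

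For $\cC>0$, write $\tau=\tanh(\sqrt{\cC}\,t/2)$. Then
\begin{equation*}
\cS(t)=\frac{4\pi}{\sqrt{\cC}}\,\frac{\tau}{1+\frac{k^2}{\cC}\tau^2}.
\end{equation*}
Since $1-\tau=O(e^{-\sqrt{\cC}\,t})$, we get $\cS\to 4\pi\sqrt{\cC}/(\cC+k^2)$ with exponentially small error. The same expansion shows that
\begin{equation*}
\phi\to \frac{2\sqrt{\cC}}{\sqrt{\cC}\,\sqrt{\cC+k^2}}\cdot\frac{1}{\sqrt{\cC}}\cdot\sqrt{\cC}=\frac{2}{\sqrt{\cC+k^2}},\qquad \psi\to\sqrt{\frac{\cC}{\cC+k^2}},
\end{equation*}
and that every $t$-derivative of $\phi$ and $\psi$ is $O(e^{-\sqrt{\cC}\,t})$. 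Consequently $g$ converges exponentially fast, in every $C^m$ norm on $[T,\infty)\times S^1\times\R$, to the flat product metric $dt^2+\phi_\infty^2d\theta^2+\psi_\infty^2dz^2$. This is a flat cylinder with finite radius $\phi_\infty$.

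\textbf{Curvature of the warped metric.} For the diagonal metric \eqref{eq:metric}, the three coordinate $2$-planes diagonalize the curvature operator. The sectional curvatures are
\begin{equation*}
K_{t\theta}=-\frac{\phi''}{\phi},\qquad K_{tz}=-\frac{\psi''}{\psi},\qquad K_{\theta z}=-\frac{\phi'\psi'}{\phi\psi},
\end{equation*}
and the scalar curvature is
\begin{equation*}
s_g=-2\left(\frac{\phi''}{\phi}+\frac{\psi''}{\psi}+\frac{\phi'\psi'}{\phi\psi}\right).
\end{equation*}

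In the case $\cC>0$, $\phi$ and $\psi$ are bounded below by positive constants for $t\ge T$, and all their derivatives decay like $e^{-\sqrt{\cC}\,t}$. Hence every sectional curvature is $O(e^{-\sqrt{\cC}\,t})$, and so is $s_g$.

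In the case $\cC=0$, put $u=1+(kt/2)^2$, so that $\psi=u^{-1/2}$ and $\phi=tu^{-1/2}$. We expand in powers of $1/t$:
\begin{itemize}
\item $\psi\sim \frac{2}{kt}$, which is exactly $\propto t^{-1}$, so $\psi''/\psi\sim 2/t^2$;
\item $\phi\to 2/k-O(t^{-2})$, so $\phi''/\phi=O(t^{-4})$;
\item $\phi'=O(t^{-3})$ and $\psi'/\psi\sim -1/t$, so $\phi'\psi'/(\phi\psi)=O(t^{-4})$.
\end{itemize}
Substituting into the formula for $s_g$ gives $s_g=-4/t^2+O(t^{-4})$.

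\textbf{Where care is needed.} The main risk is sign and normalization conventions in the scalar-curvature formula; the footnote's Ricci convention is the standard one, so $s_g=\sum K$ with the signs above. As a cross-check, we can recompute $s_g$ from the trace of \eqref{eq:einsteinGSS}:
\begin{equation*}
s_g=-\Delta f+\tfrac{3}{2}|H|^2,\qquad |H|^2=k^2e^{2f}.
\end{equation*}
For $\cC=0$ we have $\tfrac32 k^2e^{2f}=O(t^{-4})$, and
\begin{equation*}
\Delta f=f''+\Big(\frac{\phi'}{\phi}+\frac{\psi'}{\psi}\Big)f'.
\end{equation*}
Since $f\sim-2\ln t$, this gives $f''\sim 2/t^2$ and the second term $\sim(-1/t)(-2/t)=2/t^2$. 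So $\Delta f\sim 4/t^2$ and $s_g\sim-4/t^2$, which agrees with the direct computation. The same trace identity also confirms the exponential decay of $s_g$ in the case $\cC>0$.
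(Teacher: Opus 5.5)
Your proposal is correct and follows essentially the paper's route: read off $\cS=2\pi\phi\psi$ and the limits $\phi_\infty,\psi_\infty$ from the explicit formulas, then substitute into $K_{t\theta}=-\phi''/\phi$, $K_{tz}=-\psi''/\psi$, $K_{\theta z}=-\phi'\psi'/(\phi\psi)$. The dominant term $-\psi''/\psi\sim -2/t^2$ gives $s_g\sim -4/t^2$ for $\cC=0$, and the exponential decay of all derivatives of $\phi,\psi$ gives exponentially decaying curvatures for $\cC>0$. Your trace cross-check is a sound extra; note that your $\Delta$ there is $\Tr_g\Hess^g=-\Delta^g$ in the paper's convention, which you use consistently.
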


In particular, solitons with $\cC=0$ are solutions to the supergravity system \cite{Callanetal1985}. Thus, the fruitful results of Podesta's and Raffero's spherically symmetric ansatz and the cylindrical ansatz exposed here encourage to use a similar approach within the framework of \emph{Heterotic supergravity}. The Heterotic supergravity system is a generalization of the supergravity system that includes quadratic terms in curvature \cite{Moroianu2022,Moroianu2024}. In three dimensions, it admits a particularly amenable form \cite{Moroianu2024,Moroianu2026a,Moroianu2026b}.   


\section{Preliminaries}
\label{sec:preliminaries}

We start by characterizing some identities of the steady gradient soliton system. 

\begin{proposition}
\label{prop:sgsolitonsaresgrs}
A steady gradient generalized Ricci soliton satisfies
\begin{align}
\delta^gH + \iota_{\nabla f} H = \beta \,, \label{eq:maxwellbeta} \\
\Delta^g f + |\nabla f|_g^2 - |H|_g^2 = \cC \label{eq:dilatonC} \,.
\end{align}
for $\beta$ a closed 2-form and $\cC$ a constant.
\end{proposition}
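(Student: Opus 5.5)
The statement has two parts, and I would treat them separately: the Maxwell identity \eqref{eq:maxwellbeta} is essentially formal, while the dilaton identity \eqref{eq:dilatonC} comes from a divergence computation.

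\textbf{Maxwell identity.} Since $H$ is closed, Cartan's formula gives $\cL_{\nabla f}H=\dd\iota_{\nabla f}H$. Likewise $\Delta^g H=\dd\delta^g H$. Hence \eqref{eq:maxwellGSS} reads
\begin{equation*}
\dd\big(\delta^g H+\iota_{\nabla f}H\big)=0 .
\end{equation*}
So $\beta:=\delta^g H+\iota_{\nabla f}H$ is a closed $2$-form, which is exactly \eqref{eq:maxwellbeta}.

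\textbf{Dilaton identity: the divergence computation.} I would take the divergence of \eqref{eq:einsteinGSS}, write $\Delta_a=\Tr\Hess=-\Delta^g$ on functions, and use four ingredients.
\begin{itemize}
\item The contracted Bianchi identity: $\operatorname{div}\Ric^g=\tfrac12\dd s_g$.
\item The Bochner-type identity: $\operatorname{div}\Hess f=\dd(\Delta_a f)+\Ric^g(\nabla f,\cdot)$.
\item The standard identity for closed $3$-forms: $\operatorname{div}(H\circ_g H)=-\langle\delta^gH,\iota_{(\cdot)}H\rangle_g+c\,\dd|H|_g^2$, with $c$ fixed by the normalisations of the paper (this uses $\dd H=0$).
\item The trace of \eqref{eq:einsteinGSS}: $s_g+\Delta_a f-\tfrac32|H|^2_g=0$, which eliminates $s_g$.
\end{itemize}
In the term $\Ric^g(\nabla f,\cdot)$ I would substitute $\Ric^g=-\Hess f+\tfrac12 H\circ_g H$ from \eqref{eq:einsteinGSS}. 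This gives $-\tfrac12\dd|\nabla f|^2+\tfrac12(H\circ_gH)(\nabla f,\cdot)$. The last piece equals $\tfrac12\langle\iota_{\nabla f}H,\iota_{(\cdot)}H\rangle_g$. Replacing $\delta^gH$ by $\beta-\iota_{\nabla f}H$ using \eqref{eq:maxwellbeta}, the two $\langle\iota_{\nabla f}H,\iota_{(\cdot)}H\rangle$ contributions should cancel. Collecting the exact terms, I expect
\begin{equation*}
\dd\big(\Delta^g f+|\nabla f|^2_g-|H|^2_g\big)=\lambda\,\langle\beta,\iota_{(\cdot)}H\rangle_g
\end{equation*}
for a universal constant $\lambda$.

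\textbf{The main obstacle: the $\beta$-term.} Constancy of $\cC$ follows once the right-hand side above is shown to vanish, and this is where I expect the difficulty. I would use the three-dimensional structure. Write $H=u\,\mathrm{vol}_g$. Then $\langle\beta,\iota_X H\rangle_g=u\,(\ast\beta)(X)$, and by \eqref{eq:maxwellbeta} the form $\ast\beta$ is, up to sign, $\dd u+u\,\dd f=e^{-f}\dd(e^fu)$. Closedness of $\beta$ says that $e^{-f}\dd(e^fu)$ is coclosed. I would then either:
\begin{itemize}
\item show that the resulting term $u\,e^{-f}\dd(e^fu)$ recombines with the exact terms, absorbing $\tfrac12\dd(u^2)$ into the $|H|^2$ coefficient; or
\item use the global freedom the statement leaves open (the constant $k$ in the Maxwell solution for the ansatz corresponds to $\beta=0$) to conclude.
\end{itemize}
I am not certain that the residual $u^2\,\dd f$ piece cancels for arbitrary closed $\beta$. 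Checking this carefully is the step I would focus on, and getting the normalisation constants in the identity for $\operatorname{div}(H\circ_gH)$ right is essential to it.
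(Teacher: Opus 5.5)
Your treatment of \eqref{eq:maxwellbeta} is the paper's argument. For \eqref{eq:dilatonC} you set up the same divergence computation as the paper: contracted Bianchi, Weitzenb\"ock/Bochner, and the trace of \eqref{eq:einsteinGSS}. Your predicted identity is right, with $\lambda=1$. Keeping $\beta$ throughout the paper's computation gives
\begin{equation*}
\dd\big(\Delta^g f+|\nabla f|_g^2-|H|_g^2\big)=\langle\beta,\iota_{(\cdot)}H\rangle_g .
\end{equation*}
In dimension three, with $H=u\,\mathrm{vol}_g$, this equals $-u\,(\dd u-u\,\dd f)$. There is a minor sign slip in your version: $\ast\beta=-(\dd u-u\,\dd f)=-e^{f}\dd(e^{-f}u)$, not $\dd u+u\,\dd f$. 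This is what produces $u=ke^{f}$ in Section~\ref{sec:reductionmaxwell}.

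As written, your proposal stops short of proving that $\cC$ is constant. The paper gets past this point only by substituting $\delta^gH=-\iota_{\nabla f}H$ when it takes the divergence of $H\circ_gH$. There the term $-\frac12\langle\delta^gH,H(v)\rangle_g$ is rewritten as $+\frac12 H\circ_gH(\nabla f,v)$, so it tacitly takes $\beta=0$ rather than using \eqref{eq:maxwellbeta}.

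Your worry is justified, and neither of your two options can work for an arbitrary closed $\beta$, because then the claim is false. Take the ansatz \eqref{eq:ansatz} on $(t_0-\epsilon,t_0+\epsilon)\times S^1\times\R$, away from the axis. Set $H=u\,\phi\psi\,dt\wedge d\theta\wedge dz$ and $\beta=\rho\,d\theta\wedge dz$ with $\rho\neq0$. Two sets of equations then form a regular first-order system in $(\phi,\phi',\psi,\psi',f,f',u)$ wherever $\phi,\psi>0$: the three Einstein components of Section~\ref{sec:reduction} with $k^2e^{2f}$ replaced by $u^2$, and the Maxwell relation $u'=uf'-\rho/(\phi\psi)$. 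So there are local solutions with $u(t_0)\neq0$, and they satisfy \eqref{eq:einsteinGSS}--\eqref{eq:maxwellGSS}. Yet a direct computation from the reduced equations gives $\frac{d}{dt}\big(\Delta^gf+(f')^2-u^2\big)=u\,(uf'-u')=\rho u/(\phi\psi)\neq0$. This is just the identity above evaluated on $\partial_t$.

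Your first option, absorbing the residual into exact terms, is therefore impossible. Your second, appealing to $\beta=0$, is an extra hypothesis rather than a deduction. What is true is that \eqref{eq:dilatonC} holds whenever $\langle\beta,\iota_{(\cdot)}H\rangle_g=0$, in particular when $\delta^gH+\iota_{\nabla f}H=0$. Under that hypothesis your argument closes and coincides with the paper's.

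For the paper's application this suffices. Lemma~\ref{lemma:maxwell} proves $\beta=0$ for smooth cylindrical solitons using only \eqref{eq:maxwellbeta} and the axis conditions. It can therefore be established first, and the dilaton identity derived afterwards.
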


\begin{proof}
First we prove \eqref{eq:maxwellbeta}. By definition of the Hodge laplacian and using Cartan's formula, it is straightforward to see that \eqref{eq:maxwellGSS} can be written as
\begin{equation*}
\dd \left(\delta^gH + \iota_{\nabla f} H\right) = 0 \,,
\end{equation*}
thus
\begin{equation*}
\delta^gH + \iota_{\nabla f} H = \beta \,, \qquad \beta \in \Omega^2_{\mathrm{cl}}(M) \,.
\end{equation*}

Now we prove \eqref{eq:dilatonC}. The Weitzenböck formula applied to $\dd f$ gives\footnote{Recall that, in our conventions, $\Delta^g f=-\Tr_g\Hess^g f$. Similarly, note the normalization in $H\circ_g H$, explained in Section \ref{sec:introduction}.}
\begin{equation*}
\dd\Delta^g f = \nabla^{g \ast}\Hess^g f + \Ric^g(\nabla f) \,.
\end{equation*}

On the other hand, the contracted Bianchi identity yields
\begin{equation*}
\nabla^{g \ast}\Ric^g = - \frac{1}{2}\dd s_g \,. 
\end{equation*}

Then, taking the divergence of \eqref{eq:einsteinGSS} and using \eqref{eq:maxwellGSS} we obtain:
\begin{align*}
0 &= \nabla^{g \ast}\left(\Ric^g + \Hess^g f - \frac{1}{2}H \circ_g H\right)(v) \\
&= \nabla^{g \ast}\Ric^g(v) + \nabla^{g \ast}\Hess^g f - \frac{1}{2}\langle\delta^gH, H(v)\rangle_g + \frac{1}{2}\langle H(e_i),\nabla^g_vH(e_i) \rangle_g \\
&= -\frac{1}{2}\dd s_g(v) + \dd\Delta^g f - \Ric^g(\nabla f) + \frac{1}{2}H \circ_g H(\nabla f,v) + \frac{1}{4}\dd|H|_g^2(v) \,.
\end{align*}

Using \eqref{eq:einsteinGSS} and its trace reduces the expression above to
\begin{align*}
0 &= -\frac{1}{2}\dd s_g(v) + \dd\Delta^g f + \Hess^g f(\nabla f) + \frac{1}{4}\dd|H|_g^2(v) \\
&= - \frac{1}{2}\dd\left(s_g -  2\Delta^g f - |\nabla f|_g^2 - \frac{1}{2}|H|_g^2 \right)(v) \\
&= \frac{1}{2}\dd\left(\Delta^g f + |\nabla f|_g^2 - |H|_g^2 \right)(v) \,,
\end{align*}

hence
\begin{equation}\label{eq:dilaton}
\Delta^g f + |\nabla f|_g^2 - |H|_g^2 = \cC \,, \qquad \cC \in \R \,.
\end{equation}
\end{proof}

Again, inspired by the supergravity literature, we refer to \eqref{eq:dilatonC} as the \emph{dilaton} equation, and to $\cC$ as the \emph{dilatonic constant}.

Henceforth, we only consider cylindrical, i.e., $\SO(2) \times \R$-invariant  solitons $(g, H, f)$ on $\R^3$ of the form\footnote{The most general smooth $\SO(2) \times \R$-invariant metric contains an off-diagonal term:
\begin{equation*}
g = dt \otimes dt + \phi(t)^2 d\theta \otimes d\theta + \zeta(t)\,(d\theta \otimes dz + dz \otimes d\theta) + \psi(t)^2 dz \otimes dz,
\end{equation*}
where $\phi, \psi > 0$ and $\zeta$ are smooth functions. This term cannot be eliminated globally by a change of coordinates while preserving the diagonal form of the metric: it corresponds to a twist in the $\SO(2)$ factor, making the principal orbits nontrivially fibered over the $t$-line. The analysis of the full system with $\zeta \neq 0$ is more involved and likely lacks explicit solutions, so here we focus on the diagonal case.}
\begin{equation}\label{eq:ansatz}
g = dt \otimes dt + \phi(t)^2 \,d\theta \otimes d\theta + \psi(t)^2 \,dz \otimes dz \,, \qquad H = h(t) \,dt \wedge d\theta \wedge dz \,, \qquad f = f(t) \,,
\end{equation}
where $\phi,\psi>0$, $h$, and $f$ are smooth functions.

To ensure that our ansatz extends through the \emph{singular orbit} (the $z$-axis) covering the whole $\R^3$, we must ensure certain compatibility conditions on the functions.

\begin{lemma}\label{lemma:smoothnessconditions}
Let $(g,H,f)$ be of the form \eqref{eq:ansatz}. Then $(g,H,f)$ extends smoothly to a neighborhood of the $z$-axis ($t=0$) if and only if
\begin{enumerate}[label=(\roman*), leftmargin=*, topsep=5pt, itemsep=5pt]
\item $\phi$ is odd, $\psi$ is even, with $\phi(0)=0$, $\phi'(0)=1$, and $\psi(0)>0$.
\item $h$ is odd with $h(0)=0$.
\item $f$ is even with $f'(0)=0$.
\end{enumerate}
\end{lemma}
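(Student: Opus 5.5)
The plan is to pass to Cartesian coordinates near the $z$-axis and use the standard criterion for smoothness of $\SO(2)$-invariant tensors near a fixed point set, in the form going back to Kazdan--Warner and Eschenburg--Wang. Write $x=t\cos\theta$, $y=t\sin\theta$, so $t^2=x^2+y^2=:r^2$ and $\R^3$ is covered by $(x,y,z)$. The key elementary fact is: a function $u(t)$ defines a smooth $\SO(2)$-invariant function $u(\sqrt{x^2+y^2})$ on a neighborhood of the axis if and only if $u$ extends to a smooth even function of $t$. The direction ``even $\Rightarrow$ smooth'' is Whitney's theorem, $u(t)=\tilde u(t^2)$ with $\tilde u$ smooth. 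The converse follows by restricting to the line $y=0$, where $u(|x|)$ must be smooth. Item (iii) follows at once: $f$ is smooth iff $f$ is even, which forces $f'(0)=0$.

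For the metric I will use $dt^2+t^2d\theta^2=dx^2+dy^2$ and $d\theta=(x\,dy-y\,dx)/r^2$. This gives
\[
g = dx^2+dy^2 + \frac{\phi^2-t^2}{t^4}\,(x\,dy-y\,dx)^2 + \psi^2\,dz^2 .
\]
Smoothness of $\psi^2 dz^2$ with $\psi>0$ is equivalent to $\psi^2$, and hence $\psi$, being even and positive at $0$. For the angular part, sufficiency works as follows. If $\phi$ is odd with $\phi'(0)=1$, then $\phi^2-t^2$ is even and vanishes to order $4$, so $(\phi^2-t^2)/t^4$ is a smooth even function of $t$, hence smooth in $(x,y)$. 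For necessity, I evaluate $g(\partial_x,\partial_x)$ along the ray $y=0$, $x>0$ and along $x$-axis directions rotated. More cleanly, I take the coefficient of $dy^2$ at $(x,0)$, which equals $\phi(|x|)^2/x^2$ and must be a smooth positive function of $x$. Continuity at $x=0$ with value $1$ (the metric must be positive definite, and the $dx^2$ coefficient at $(0,y)$ must match by $\SO(2)$-invariance) gives $\phi(0)=0$ and $\phi'(0)=1$. Smoothness of $\phi(|x|)^2/x^2$ gives evenness of $\phi^2/t^2$, hence $\phi/t$ is even by positivity, that is, $\phi$ is odd.

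For $H$ I will use $dt\wedge d\theta = dx\wedge dy/t$, so $H=(h(t)/t)\,dx\wedge dy\wedge dz$. Hence $H$ is smooth iff $h(t)/t$ extends to a smooth even function. This is equivalent to $h$ being odd, which automatically gives $h(0)=0$ as stated in (ii).

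The main obstacle is the necessity direction for $\phi$. One must argue carefully that smoothness of the tensor forces the full parity condition on $\phi$, and not only the first-order conditions $\phi(0)=0$, $\phi'(0)=1$. I would handle this by reducing every coefficient to a one-variable function restricted to a line through the axis and invoking the even-function criterion above. Throughout I interpret ``$\phi$ odd'' as ``$\phi$ extends to a smooth odd function of $t\in\R$'', which is the natural reading given that $t\ge0$.
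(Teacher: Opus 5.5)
Your proof is correct and follows the same route as the paper: you pass to Cartesian coordinates $x=t\cos\theta$, $y=t\sin\theta$, use that a radial function is smooth iff it extends evenly, and read off (ii) from $H=(h/t)\,dx\wedge dy\wedge dz$; the only difference is that you verify the metric condition (i) by hand from $g=dx^2+dy^2+\frac{\phi^2-t^2}{t^4}(x\,dy-y\,dx)^2+\psi^2dz^2$, whereas the paper simply cites Petersen's Proposition 1.4.7. One small clarification: the value $1$ at the origin comes from continuity of the $dx^2$ coefficient, which is identically $1$ along $y=0$ but equals $\phi(|y|)^2/y^2$ along $x=0$, so your parenthetical should be phrased that way.
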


\begin{proof}
The conditions follow from fixing Cartesian coordinates $(x,y,z)$ defined by $x=t\cos\theta$, $y=t\sin\theta$, $z=z$ and studying the limit at $x=0, y=0$.

By \cite[Proposition 1.4.7]{Petersen2016}, $g$ is smooth at $t=0$ if and only if $(i)$ is satisfied.

In cylindrical coordinates, $H$ takes the form
\begin{equation*}
H = \frac{h(t)}{t}\,dx\wedge dy\wedge dz \,.
\end{equation*} 

For $H$ to be smooth at $t=0$, $h(t)/t$ must be an even function of $t$, hence $h(t)$ is odd and $h(0)=0$, giving condition (ii).

Similarly, since $f(t)=f(\sqrt{x^2+y^2})$, $f$ is smooth at $t=0$ if and only if $f$ is even (hence $f'(0)=0$), proving (iii).
\end{proof}

A \emph{smooth soliton} (henceforth, simply soliton) is then a triple $(g,H,f)$ satisfying the conditions in Lemma \ref{lemma:smoothnessconditions}. 


\section{Reduction of the gradient generalized Ricci system}

We now reduce the gradient generalized Ricci soliton system \eqref{eq:einsteinGSS}--\eqref{eq:maxwellGSS} under the diagonal cylindrical ansatz \eqref{eq:ansatz}, which allows to solve the Maxwell equation \eqref{eq:maxwellGSS} explicitly. Then, we study the symmetries of the system.

\subsection{Reduction of the Maxwell equation}
\label{sec:reductionmaxwell}

Using
\begin{equation*}
H = h(t)\,dt\wedge d\theta\wedge dz \,, \qquad \ast_g H = \frac{h(t)}{\sqrt{\det g(t)}} \,, \qquad \det g = \phi^2\psi^2 \,, \qquad \nabla f = f'(t)\partial_t \,,
\end{equation*}
where $\ast_g$ is the Hodge dual, we find
\begin{equation*}
\delta^g H = -\ast_g \,\dd \,(\ast_g H) 
= -\ast_g \,\left( \left(\frac{h}{\sqrt{\det g}}\right)' dt \right) = -\left(\frac{h}{\sqrt{\det g}}\right)' \sqrt{\det g} \,d\theta\wedge dz \,.
\end{equation*}

and
\begin{equation*}
\iota_{\nabla f} H = h(t) \,dt\wedge d\theta\wedge dz(f'(t)\partial_t) = h(t) f'(t) \,d\theta\wedge dz \,.
\end{equation*}

Therefore, by Proposition \ref{prop:sgsolitonsaresgrs}:
\begin{equation}\label{eq:maxred}
\beta = \delta^g H + \iota_{\nabla f} H = 
\left[-\left(\frac{h}{\sqrt{\det g}}\right)' \sqrt{\det g} + h f' \right] d\theta\wedge dz \,.
\end{equation}

We now show that a smooth soliton satisfies $\beta=0$.

\begin{lemma}\label{lemma:maxwell}
A smooth diagonal cylindrical solution to \eqref{eq:maxwellGSS} satisfies
\begin{equation*}
\delta^gH + \iota_{\nabla f} H = 0 \,.
\end{equation*}
\end{lemma}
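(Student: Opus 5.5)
The plan is to combine the closedness of $\beta$ from Proposition \ref{prop:sgsolitonsaresgrs} with the explicit reduced form \eqref{eq:maxred}. This will show that the coefficient of $\beta$ is a constant, and then the smoothness conditions of Lemma \ref{lemma:smoothnessconditions} will force that constant to vanish.

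\textbf{Step 1: the coefficient of $\beta$ is constant.} By \eqref{eq:maxred}, $\beta = b(t)\,d\theta\wedge dz$ with
\begin{equation*}
b(t) = -\left(\frac{h}{\phi\psi}\right)'\phi\psi + h f' \,.
\end{equation*}
Since $\beta$ is closed by Proposition \ref{prop:sgsolitonsaresgrs}, we have $0=\dd\beta = b'(t)\,dt\wedge d\theta\wedge dz$. Hence $b$ is constant on the principal part $t>0$, and by continuity up to $t=0$. So it suffices to show $\lim_{t\to0^+} b(t)=0$.

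\textbf{Step 2: evaluate the limit at the singular orbit.} By Lemma \ref{lemma:smoothnessconditions}, $h$ is odd with $h(0)=0$, so $h(t)/t$ is a smooth even function. Similarly $\phi$ is odd with $\phi'(0)=1$, so $\phi(t)/t$ is smooth, even, and equal to $1$ at $t=0$, hence positive near $0$. Also $\psi$ is even with $\psi(0)>0$. Therefore
\begin{equation*}
\frac{h}{\phi\psi} = \frac{h/t}{(\phi/t)\,\psi}
\end{equation*}
is a smooth even function near $t=0$, and its derivative is bounded there. Since $\phi(0)=0$, the first term of $b$ tends to $0$. The second term $hf'$ tends to $0$ because $h(0)=0$, and also $f'(0)=0$. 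Thus $b(0)=0$, so $b\equiv0$ and $\beta=0$.

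\textbf{Main obstacle.} The only delicate point is justifying that $(h/(\phi\psi))'$ stays bounded as $t\to0$, even though $\phi\to0$. The parity and normalization conditions of Lemma \ref{lemma:smoothnessconditions} handle this, since they make the quotient smooth at $t=0$.

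An alternative argument, if one prefers to avoid derivatives, is this: $\beta$ is a smooth form on $\R^3$ because $g$, $H$ and $f$ are. In Cartesian coordinates, $d\theta\wedge dz = t^{-2}(x\,dy-y\,dx)\wedge dz$, which has pointwise norm $1/(\phi\psi)\sim1/t$ and so blows up at the axis. A constant multiple of it can therefore be smooth only if the constant is $0$.
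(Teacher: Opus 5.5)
Your proof is correct and follows essentially the same route as the paper: closedness of $\beta$ makes its coefficient $b$ constant. The parity conditions of Lemma \ref{lemma:smoothnessconditions} then make $h/(\phi\psi)$ smooth at the axis, which forces $b\to0$, just as the paper does with Hadamard's lemma. Your alternative argument is also valid and slightly more direct: $|d\theta\wedge dz|_g = 1/(\phi\psi)$ blows up at the axis, so a constant multiple of it can be smooth there only if the constant is $0$.
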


\begin{proof}
Since $\beta$ is closed, under the ansatz \eqref{eq:ansatz}, Equation \eqref{eq:maxwellGSS} reduces to
\begin{equation*}
\dd\left( -\left(\frac{h}{\sqrt{\det g}}\right)' \sqrt{\det g} + h f' \right) \, d\theta\wedge dz = 0 \,.
\end{equation*}

The expression inside the parentheses depends only on $t$, hence it is constant, say $\rho\in\R$:
\begin{equation}\label{eq:integmaxwellGSS}
b(t) := - \left(\frac{h(t)}{\sqrt{\det g(t)}}\right)' \sqrt{\det g(t)} + h(t) f'(t) = \rho \,.
\end{equation}

We prove $\rho=0$ by showing that $\lim_{t\to0}b(t)=0$.

By Lemma \ref{lemma:smoothnessconditions} $\phi$ is odd with $\phi'(0)=1$, $\psi$ is even with $\psi(0):=\psi_0>0$, and $f$ is even. We now study the two terms in $b$. First, since $f$ and $h$ are smooth, $f'$ is smooth and $hf'$ is smooth. In particular, $\lim_{t\to0}hf'=0$.

Now, recall that $\sqrt{\det g(t)} = \phi(t)\psi(t)$ is smooth and odd, with $\sqrt{\det g(t)}(0)=0$ and $\sqrt{\det g(t)}'(0)=\psi_0>0$ Using Hadamard's lemma, we write
\begin{equation*}
h(t) = t \,\tilde{h}(t) \,, \qquad \sqrt{\det g(t)} = t \,\tilde{g}(t) \,, \qquad \tilde{h}, \,\tilde{g} \in C^\infty(\R) \,, \qquad \tilde{h}, \,\tilde{g} \quad \text{even} \,.
\end{equation*}

Moreover,
\begin{equation*}
\tilde{g}(0) = \lim_{t\to0}\frac{\sqrt{\det g(t)}}{t} = (\sqrt{\det g(t)})' = \psi_0 > 0 \,,
\end{equation*}

thus $\left(h/\sqrt{\det g}\right)'$ is smooth at $t=0$. Then,
\begin{equation*}
\lim_{t\to0}\left(\frac{h}{\sqrt{\det g}}\right)'\sqrt{\det g} = 0 \,,
\end{equation*}

whence $\lim_{t\to0}b(t)=0$. Therefore, $\rho=0$.
\end{proof}

Thus, by Lemma \ref{lemma:maxwell}, Equation \eqref{eq:maxred} simplifies to
\begin{equation*}
\left(\frac{h}{\sqrt{\det g}}\right)' = \frac{h}{\sqrt{\det g}}\, f' \,.
\end{equation*}

Setting $u(t) = h(t)/\sqrt{\det g(t)}$, the equation above becomes $u' = u f'$, and integrating we find
\begin{equation*}
u(t) = k e^{f(t)} \,, \qquad k \in \R \,.
\end{equation*}

Returning to $h$, we obtain:
\begin{equation}\label{eq:Hsolution}
h(t) = k e^{f(t)} \sqrt{\det g(t)} = k e^{f(t)} \phi(t)\psi(t) \,.
\end{equation}

In particular, this allows to rewrite the triple $(g,H,f)$ as a tuple $(\phi,\psi,f,k)$ where $k$ is the integration constant in \eqref{eq:Hsolution}.


\subsection{Reduction of the Einstein equation}
\label{sec:reduction}

We first obtain the curvature of the ansatz metric. A direct computation shows that the unique non-vanishing Christoffel symbols of \eqref{eq:metric} are  
\begin{equation}\label{eq:christoffel}
\Gamma^{\partial_t}_{\partial_\theta\partial_\theta} = -\phi\phi' \,, \qquad \Gamma^{\partial_t}_{\partial_z\partial_z} = -\psi\psi' \,, \qquad \Gamma^{\partial_\theta}_{\partial_t\partial_\theta} = \Gamma^{\partial_\theta}_{\partial_\theta \partial_t} = \frac{\phi'}{\phi} \,, \qquad \Gamma^{\partial_z}_{\partial_t\partial_z} = \Gamma^{\partial_z}_{\partial_z\partial_t} = \frac{\psi'}{\psi} \,.
\end{equation}

Then, using the standard formula
\begin{equation*}
\Ric^g(e_i,e_j) = \sum_{k,l=1}^3 \left[\partial_{e_k}\Gamma_{e_ie_j}^{e_k} - \partial_{e_i}\Gamma _{e_pe_j}^{e_p} + \left(\Gamma _{e_ie_j}^{e_l}\Gamma_{e_ke_l}^{e_k} - \Gamma_{e_ke_j}^{e_l}\Gamma_{e_ie_l}^{e_k}\right)\right] \,,
\end{equation*}

we calculate the Ricci tensor (see also \cite[\S 4.2.4]{Petersen2016}):
\begin{equation}\label{eq:riccidoublywarped}
\Ric^g = -\left(\frac{\phi''}{\phi} + \frac{\psi''}{\psi}\right) \,dt \otimes dt -\phi\left(\phi'' + \frac{\phi'\psi'}{\psi}\right) \,d\theta \otimes d\theta -\psi\left(\psi'' + \frac{\phi'\psi'}{\phi}\right) \,dz \otimes dz \,.
\end{equation}  

Using the symmetry of $f$, its Hessian reduces to
\begin{equation*}
\Hess^g f = \nabla^g \dd f = \nabla^g (f'(t)\,dt) = f'' \,dt \otimes dt + f' \nabla^g dt \,,
\end{equation*}

whence (using \eqref{eq:christoffel}; see again \cite[\S 4.2.4]{Petersen2016})
\begin{equation}\label{eq:Hessianf}
\Hess^g f = f'' \,dt \otimes dt + \phi\phi' f' \,d\theta \otimes d\theta + \psi\psi' f' \,dz \otimes dz \,.
\end{equation}

Finally, since $\iota_vX = \iota_X \ast_g \frac{h}{\sqrt{\det g}} = \frac{h}{\sqrt{\det g}} \ast_g X^{\flat_g}$, where $\flat_g$ the musical isomorphism, we compute
\begin{equation*}
H \circ_g H(v_1,v_2) = \langle \iota_{v_1}H, \iota_{v_2}H \rangle_g = \frac{h^2}{\det g} \langle \ast_g v_1^{\flat_g}, \ast_g v_2^{\flat_g} \rangle_g \,.
\end{equation*}

Then, using the explicit solution of $h$ in \eqref{eq:Hsolution}, we obtain:
\begin{equation}\label{eq:Hsquared}
H \circ_g H = k^2 e^{2f} \,g \,.
\end{equation}

Substituting the expressions \eqref{eq:riccidoublywarped}--\eqref{eq:Hsquared} into the Einstein equation yields three independent ordinary differential equations:
\begin{align*}
-\frac{\phi''}{\phi} - \frac{\psi''}{\psi} + f'' - \frac{1}{2}k^2 e^{2f} &= 0 \,, \\
-\phi\phi'' - \phi\phi' \frac{\psi'}{\psi} + \phi\phi' f' - \frac{1}{2}k^2 e^{2f} \phi^2 &= 0 \,, \\
-\psi\psi'' - \psi\psi' \frac{\phi'}{\phi} + \psi\psi' f' - \frac{1}{2}k^2 e^{2f} \psi^2 &= 0 \,.
\end{align*}

Rearranging the system above and substituting the second and third equations into the first one, we obtain the following evolution system for the second derivatives:
\begin{align}
\phi'' &= -\frac{\psi'}{\psi} \phi' + f' \phi' - \frac{1}{2}k^2 e^{2f} \phi \,, \label{eq:phi''} \\
\psi'' &= -\frac{\phi'}{\phi} \psi' + f' \psi' - \frac{1}{2}k^2 e^{2f} \psi \,, \label{eq:psi''} \\
f'' &= \left(\frac{\phi'}{\phi} + \frac{\psi'}{\psi}\right) f' - 2\frac{\phi'\psi'}{\phi\psi} - \frac{1}{2}k^2 e^{2f} \,. \label{eq:f''}
\end{align}


\subsection{Reduction of the dilaton equation}
\label{sec:reductionsoliton}

Computing the relevant quantities
\begin{gather*}
|H|_g^2 = \frac{h^2}{\det g} = k^2 e^{2f} \,, \qquad |\nabla f|_g^2 = (f')^2 \,, \\
\Delta^g f = - f'' - \frac{(\det g)'}{2\det g} f' = - f'' - \left( \frac{\phi'}{\phi} + \frac{\psi'}{\psi} \right) f' \,,
\end{gather*}

and substituting into the dilaton equation \eqref{eq:dilaton}, we find:
\begin{equation}\label{eq:conservationlawreduced}
- f'' - \left( \frac{\phi'}{\phi} + \frac{\psi'}{\psi} \right) f' + (f')^2 - k^2 e^{2f} = \cC \,.
\end{equation}


\subsection{Normalization and symmetry reduction}
\label{sec:normalization}

The soliton equations possess several symmetries. The following transformations map solutions to solutions and preserve the diagonal cylindrical ansatz \eqref{eq:ansatz}:

\begin{enumerate}[label=(\roman*), leftmargin=*, topsep=5pt, itemsep=5pt]
\item \emph{Homothety.} For any $\lambda > 0$, the triple
\begin{equation*}
(\tilde{g}, \tilde{f}, \tilde{k}) = (\lambda^2 g, f - \ln\lambda, k)
\end{equation*}
satisfies the gradient generalized Ricci system whenever $(g,H,f)$ does. This rescaling freedom corresponds to an overall choice of length scale.

\item \emph{Shift of the potential.} The evolution system is invariant under
\begin{equation*}
(f, k) \longmapsto (f + a,\ k e^{-a}) \,, \qquad a \in \R \,,
\end{equation*}

because the combination $k e^{f}$, and consequently $k^2 e^{2f}$, remains unchanged. This reflects the fact that $f$ is defined only up to an additive constant once $k$ is fixed.

\item \emph{Orientation reversal of $H$.} Replacing $H$ by $-H$ sends $k$ to $-k$. Since only $k^2$ appears in the metric equations, we may always assume $k \geq 0$. The case $k=0$ reduces to the classical Ricci soliton system; we focus on $k>0$.
\end{enumerate}

The two continuous symmetries (homothety and $f$-shift) allow us to normalize two of the three initial parameters $(\psi_0, f_0, k)$. We proceed in two steps:

\begin{enumerate}[leftmargin=*, topsep=5pt, itemsep=5pt]
\item Apply a homothety with $\lambda = 1/\psi_0$. This yields
\begin{equation*}
\tilde{\psi}_0 = 1, \qquad \tilde{f}_0 = f_0 + \ln\psi_0, \qquad \tilde{k} = k \,.
\end{equation*}

\item Apply a $f$-shift with $a = - (f_0 + \ln\psi_0)$. This gives
\begin{equation*}
\hat{f}_0 = 0, \qquad \hat{k} = \tilde{k} e^{f_0 + \ln\psi_0} = k \psi_0e^{f_0} \,.
\end{equation*}
\end{enumerate}

After these transformations, we obtain a solution with
\begin{equation}\label{eq:normalizedinitial}
\hat{\psi}(0)=1, \qquad \hat{f}(0)=0, \qquad \hat{k} = k \psi_0e^{f_0} > 0 \,.
\end{equation}

Notice that, after the homothety, the metric reads:
\begin{equation*}
\tilde{g} = \frac{1}{\psi_0^2}\big( dt \otimes dt + \phi(t)^2\,d\theta \otimes d\theta + \psi(t)^2\,dz \otimes dz \big) \,.
\end{equation*}

To restore the condition $g(\partial_t,\partial_t)=1$, we define a rescaled radial coordinate $\hat{t} = t/\psi_0$. In this coordinate the metric becomes
\begin{equation*}
\tilde{g} = d\hat{t} \otimes d\hat{t} + \left(\frac{\phi(\psi_0\hat{t})}{\psi_0}\right)^2 d\theta \otimes d\theta + \left(\frac{\psi(\psi_0\hat{t})}{\psi_0}\right)^2 dz \otimes dz \,.
\end{equation*}

Introducing the new warping functions
$\hat{\phi}(\hat{t}) = \frac{\phi(\psi_0\hat{t})}{\psi_0}$,
$\hat\psi(\hat{t}) = \frac{\psi(\psi_0\hat{t})}{\psi_0}$
and evaluating at $\hat{t} = 0$ yields
$\hat{\psi}(0)=1$, $\hat{\psi}'(0)=0$, $\hat{\phi}(0)=0$, $\hat{\phi}'(0)=1$.

On the other hand, since $\hat{k}$ is simply a positive rescaling of the original $k$, we may safely rename $\hat{k}$ as $k$ in what follows, and analogously we omit the hat superscript for the rest of variables. Hence every solution of the reduced system is equivalent, modulo the above symmetries, to a unique solution satisfying the normalized initial conditions
\begin{equation}\label{eq:boundaryconditions}
\phi(0)=0 \,, \quad \phi'(0)=1 \,, \quad \psi(0)=1 \,, \quad \psi'(0)=0 \,, \quad f(0)=0 \,, \quad f'(0)=0
\end{equation}

that ensure smoothness at the $t=0$ axis. This normalization will be used throughout the remainder of the paper.

However, once we fix $k$, not every solution satisfying the initial conditions \eqref{eq:boundaryconditions} is uniquely determined: since the system is singular at $t=0$, we rather have a family of solutions. We characterize this freedom using the reduced dilaton equation \eqref{eq:conservationlawreduced}, that is, the reduced conservation law associated to the system, through the constant $\cC$. Thus, for each fixed $k>0$, the initial conditions \eqref{eq:boundaryconditions} admit a one‑parameter family of smooth solutions, distinguished by the value of $\cC$; altogether we obtain a two‑parameter family of normalized solitons labeled by $k>0$ and $\cC$. As studied in Section \ref{sec:gradient generalized Ricci}, these solutions will also be complete when $\cC\ge0$.


\section{Explicit integration and global properties}
\label{sec:gradient generalized Ricci}

We now analyze the global structure of the gradient generalized Ricci solitons with diagonal cylindrical symmetry. Throughout this section, we work with the normalization \eqref{eq:normalizedinitial}.


\subsection{Calculation of explicit solutions}

For a diagonal cylindrical soliton $(g,H,f)$, we define the \emph{orbit area function}:
\begin{equation}\label{eq:areafunction}
\cA(t) = \sqrt{\det g(t)} = \phi(t)\psi(t) \,, \qquad \cA(0) = 0 \,, \qquad \dot{\cA}(0) = 1 \,.
\end{equation}

Similarly, we define the function:
\begin{equation}\label{eq:mu}
\mu(t) := \frac{\phi(t)}{\psi(t)} \,, \qquad \mu(0) = 0 \,, \qquad \dot{\mu}(0) = 1 \,.
\end{equation}

Then 
\begin{equation}\label{eq:phipsimutau}
\phi = \sqrt{\cA\mu} \,, \qquad \psi = \sqrt{\cA/\mu} \,.
\end{equation}

For convenience, we introduce the variable
\begin{equation}\label{eq:tau}
\tau(t) = \int_0^{t}e^{f(s)} \,ds, \qquad t \ge 0 \,.
\end{equation}

Because $f$ is smooth, $e^{f(s)}>0$ and $\tau$ is a strictly increasing smooth function of $t$, hence it is a diffeomorphism onto its image $[0,\tau_{\max})$, where $\tau_{\max} = \lim_{t\to\infty}\tau(t) \le \infty$. Denote the inverse function by $t(\tau)$. Then $dt/d\tau=e^{-f(t(\tau))}$, hence
\begin{equation}\label{eq:tautderivatives}
\frac{d}{d\tau}=e^{-f}\frac{d}{dt} \,.
\end{equation}

Often, we use $\cdot$ to denote the derivative with respect to $\tau$.

First, we obtain the explicit solution of $V$ in terms of $\tau$.

\begin{lemma}\label{lemma:Atau}
Let $(\phi,\psi,f,k)$ be a gradient generalized Ricci soliton normalized as in \eqref{eq:normalizedinitial}. Then, in terms of $\tau$, $\cA$ is given by
\begin{equation}\label{eq:Atauexplicit}
\cA(\tau) = \frac{1}{k}\sin(k\tau) \,,
\end{equation}
with $0\le\tau<\pi/k$.
\end{lemma}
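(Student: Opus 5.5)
The plan is to derive a linear second‑order ODE for $\cA$ directly from the evolution system \eqref{eq:phi''}--\eqref{eq:psi''}, then observe that the change of variable \eqref{eq:tau} turns it into the harmonic oscillator equation.

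\textbf{Step 1: ODE for $\cA$ in $t$.} Differentiate $\cA=\phi\psi$ twice to get $\cA''=\phi''\psi+2\phi'\psi'+\phi\psi''$. Multiply \eqref{eq:phi''} by $\psi$ and \eqref{eq:psi''} by $\phi$:
\begin{equation*}
\phi''\psi=-\phi'\psi'+f'\phi'\psi-\tfrac12k^2e^{2f}\cA \,,\qquad \phi\psi''=-\phi'\psi'+f'\phi\psi'-\tfrac12k^2e^{2f}\cA \,.
\end{equation*}
Adding these, the $-2\phi'\psi'$ terms cancel the cross term $2\phi'\psi'$, and the $f'$ terms combine to $f'\cA'$. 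This gives
\begin{equation*}
\cA''=f'\cA'-k^2e^{2f}\cA \,.
\end{equation*}

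\textbf{Step 2: change to $\tau$.} Using \eqref{eq:tautderivatives}, we have $\dot\cA=e^{-f}\cA'$. Differentiating again,
\begin{equation*}
\ddot\cA=e^{-f}\big(e^{-f}\cA'\big)'=e^{-2f}\big(\cA''-f'\cA'\big)=-k^2\cA \,.
\end{equation*}
Hence $\cA(\tau)=a\cos(k\tau)+b\sin(k\tau)$.

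\textbf{Step 3: initial data.} The normalization \eqref{eq:boundaryconditions} gives $\cA(0)=\phi(0)\psi(0)=0$, so $a=0$. It also gives $\dot\cA(0)=e^{-f(0)}\big(\phi'(0)\psi(0)+\phi(0)\psi'(0)\big)=1$, so $b=1/k$. Therefore $\cA(\tau)=\sin(k\tau)/k$.

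A small technical point is the singularity of the system at $t=0$. The equations \eqref{eq:phi''}--\eqref{eq:psi''} involve $\psi'/\psi$ and $\phi'/\phi$, which are singular there. However, the derived identity for $\cA''$ contains no such quotients. It therefore holds for $t>0$ and extends to $t=0$ by continuity, since all functions are smooth by Lemma \ref{lemma:smoothnessconditions}. The initial values can then be imposed legitimately.

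\textbf{Step 4: range of $\tau$.} For $t>0$ we have $\phi,\psi>0$, so $\cA>0$ on $(0,\tau_{\max})$. Since $\tau$ is a continuous increasing bijection onto $[0,\tau_{\max})$, we need $\sin(k\tau)>0$ for all $0<\tau<\tau_{\max}$. This forces $\tau_{\max}\le\pi/k$, so $0\le\tau<\pi/k$.

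I expect no real obstacle here. The only care needed is in the cancellation in Step 1 and in justifying the use of the boundary values despite the singular coefficients.
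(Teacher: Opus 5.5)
Your proof is correct and follows the paper's argument essentially step by step: you derive $\cA''=f'\cA'-k^2e^{2f}\cA$ from the $\phi$ and $\psi$ equations, pass to $\tau$ to obtain the harmonic oscillator, impose $\cA(0)=0$ and $\dot\cA(0)=1$, and use positivity of $\cA$ to bound the range. Your conclusion $\tau_{\max}\le\pi/k$ is the accurate form of the paper's strict inequality, which fails for $\cC=0$ where $\tau_{\max}=\pi/k$. Your remark on the singular coefficient (really only $\phi'/\phi$, since $\psi(0)=1$) adds a bit of rigor that the paper omits.
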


\begin{proof}
Differentiating \eqref{eq:areafunction} twice and using the Einstein equations \eqref{eq:phi''}--\eqref{eq:psi''}, we find
\begin{align*}
\cA'' &= \phi''\psi + 2\phi'\psi' + \phi\psi'' \\
&= \left(-\frac{\psi'}{\psi} \phi' + f' \phi' - \frac{1}{2}k^2 e^{2f} \phi\right)\psi + 2\phi'\psi' + \phi\left(-\frac{\phi'}{\phi} \psi' + f' \psi' - \frac{1}{2}k^2 e^{2f} \psi\right) \\
&= f' (\phi'\psi + \phi\psi') - k^2 e^{2f} \phi\psi = f'\cA' - k^2 e^{2f} \cA \,,
\end{align*}

or, equivalently,
\begin{equation}\label{eq:Vequation}
\cA'' - f'\cA' + k^2 e^{2f} \cA = 0 \,.
\end{equation}

Multiplying \eqref{eq:Vequation} by the integrating factor $e^{-f}$ gives, after rearranging,
\begin{equation}\label{eq:Vequationintegfactor}
e^{-f}(e^{-f}\cA')' + k^2 \cA = 0 \,.
\end{equation}

In terms of $\tau$ and using \eqref{eq:tautderivatives}, Equation \eqref{eq:Vequationintegfactor} becomes the harmonic oscillator equation
\begin{equation}\label{eq:Atauode}
\ddot{\cA} + k^2 \cA = 0 \,, \qquad \cA(0) = 0 \,, \qquad \dot{\cA}(0) = 1 \,.
\end{equation}

The ODE \eqref{eq:Atauode} has solution
\begin{equation*}
\cA(\tau) = \frac{1}{k}\sin(k\tau) \,, \qquad 0 \le \tau < \tau_{\max} \,.
\end{equation*}

Since $\cA(t)>0$ for $t>0$, we must have $0<k\tau<\pi$, thus $\tau_{\max}<\pi/k$.
\end{proof}

Second, we obtain the expression for $\mu$ in terms of $\tau$.

\begin{lemma}\label{lemma:mutau}
Let $(\phi,\psi,f,k)$ be a gradient generalized Ricci soliton normalized as in \eqref{eq:normalizedinitial}. Then, in terms of $\tau$, $\mu$ is given by
\begin{equation}\label{eq:solutionmu}
\mu(\tau) = \frac{2}{k}\,\tan(k\tau/2) \,,
\end{equation}
with $0\le\tau<\pi/k$.
\end{lemma}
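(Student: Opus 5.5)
We derive an ODE for $\mu=\phi/\psi$ from \eqref{eq:phi''}--\eqref{eq:psi''}, in the same way as for $\cA$ in Lemma \ref{lemma:Atau}, and then pass to the variable $\tau$.

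\textbf{Step 1: a first-order equation for $\mu$.} Consider the Wronskian-type quantity $W:=\phi'\psi-\phi\psi'$. Using \eqref{eq:phi''}--\eqref{eq:psi''}, the $k^2e^{2f}$ terms cancel and
\begin{equation*}
W' = \phi''\psi-\phi\psi'' = -\frac{\psi'}{\psi}\phi'\psi + f'\phi'\psi + \frac{\phi'}{\phi}\psi'\phi - f'\phi\psi' = f' W .
\end{equation*}
The terms $-\phi'\psi'+\phi'\psi'$ also cancel. Integrating, $W=W(0)e^{f}$. The initial conditions \eqref{eq:boundaryconditions} give $W(0)=1$, so $W=e^{f}$. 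Since $\mu'=W/\psi^2$, we obtain $\mu' = e^{f}/\psi^2$, i.e. $\dot\mu = 1/\psi^2$ after applying \eqref{eq:tautderivatives}.

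\textbf{Step 2: close the equation using $\cA$.} By \eqref{eq:phipsimutau}, $\psi^2=\cA/\mu$. Hence $\dot\mu=\mu/\cA$, and by Lemma \ref{lemma:Atau},
\begin{equation*}
\frac{\dot\mu}{\mu} = \frac{k}{\sin(k\tau)} .
\end{equation*}
A primitive of $k/\sin(k\tau)$ is $\ln\tan(k\tau/2)$. Therefore $\mu = c\tan(k\tau/2)$ on $0<\tau<\tau_{\max}\le\pi/k$, for some constant $c>0$.

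\textbf{Step 3: fix the constant.} Near $t=0$ we have $\tau\sim t$, since $f(0)=0$. We also have $\mu\sim t$, since $\phi'(0)=1$ and $\psi(0)=1$. This forces $\dot\mu(0)=1$, while $c\tan(k\tau/2)\sim ck\tau/2$. Hence $c=2/k$, which gives \eqref{eq:solutionmu}.

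The main obstacle is the singularity of the separable equation at $\tau=0$, where both $\mu$ and $\cA$ vanish. The constant must therefore be determined by the asymptotics, not by an initial value. The way through is to use that $\mu(\tau)/\tau\to1$, which follows from the smoothness conditions of Lemma \ref{lemma:smoothnessconditions}. Alternatively, one can avoid the singular step entirely: check directly that $\dot\mu\,\cA=\mu$ holds with $W=e^f$, and invoke uniqueness for the regular ODE $\dot\mu=1/\psi^2$.
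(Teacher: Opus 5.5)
Your proof is correct and follows essentially the paper's route. The paper's identity $\ddot\phi\psi-\phi\ddot\psi=0$ says exactly that $\psi^2\dot\mu=e^{-f}W$ is constant, and both arguments reach $\dot\mu=\mu/\cA$ and integrate it using Lemma \ref{lemma:Atau} and $\dot\mu(0)=1$; your Wronskian version is slightly cleaner, since you read off the constant $W(0)=1$ at a regular point instead of through the limit $\mu/\cA\to1$ used in the paper.
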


\begin{proof}
Differentiating \eqref{eq:mu} with respect to $\tau$ gives
\begin{equation}\label{eq:mudprime}
\dot{\mu} = \frac{\dot{\phi}\psi - \phi\dot{\psi}}{\psi^{2}} \,, \qquad \ddot{\mu} = \frac{\ddot{\phi}\psi - \phi\ddot{\psi}}{\psi^{2}} - 2\frac{\dot{\psi}}{\psi}\,\dot{\mu} \,.
\end{equation}

After a straightforward computation using $dt/d\tau=e^{-f}$, the Einstein component equations \eqref{eq:phi''}--\eqref{eq:psi''} become
\begin{equation*}
\ddot{\phi} = -\frac{\dot{\psi}}{\psi}\,\dot{\phi} - \frac{1}{2} k^{2}\phi, \qquad
\ddot{\psi} = -\frac{\dot{\phi}}{\phi}\,\dot{\psi} - \frac{1}{2} k^{2}\psi \,.
\end{equation*}

In particular, $\ddot{\phi}\psi - \phi\ddot{\psi} = 0$, hence \eqref{eq:mudprime} reduces to $\ddot{\mu} = - 2\dot{\psi}/\psi\,\dot{\mu}$, or, equivalently,
\begin{equation}\label{eq:mudprime2}
\frac{\ddot{\mu}}{\dot{\mu}} = -2\frac{\dot{\psi}}{\psi} \,.
\end{equation}

Now, observe that 
\begin{equation*}
\frac{\dot{\mu}}{\mu} = \frac{\dot{\phi}}{\phi} - \frac{\dot{\psi}}{\psi} \,, \qquad \frac{\dot{\cA}}{\cA} = \frac{\dot{\phi}}{\phi} + \frac{\dot{\psi}}{\psi} \,,
\end{equation*}
whence 
\begin{equation*}
2\frac{\dot{\psi}}{\psi} = \frac{\dot{\cA}}{\cA} - \frac{\dot{\mu}}{\mu} \,.
\end{equation*}

Therefore, we can write \eqref{eq:mudprime2} as
\begin{equation}\label{eq:muode}
\frac{d}{d\tau}\ln\left(\frac{\dot{\mu}}{\mu}\right) = -\frac{d}{d\tau}\ln \cA \,.
\end{equation}

Integrating \eqref{eq:muode} we obtain $\dot{\mu} = \mu/\cA$, where we have used that $\lim_{\tau\to0}\mu/\cA=1$. Equivalently,
\begin{equation*}
\frac{d}{d\tau} \ln \mu = \frac{1}{\cA} = \frac{k}{\sin(k\tau)} \,.
\end{equation*}

Integrating once again, we obtain
\begin{equation*}
\mu(\tau) = C \,\tan(k\tau/2) \,,
\end{equation*}

where $C$ is a constant. Differentiating the expression above, we obtain
\begin{equation*}
\dot{\mu}(\tau) = \frac{Ck}{2}\,\cos^{-2}\left(\frac{k\tau}{2}\right) \,,
\end{equation*}
and, using $\dot{\mu}(0)=1$, we obtain $C=2/k$, whence
\begin{equation*}
\mu(\tau) = \frac{2}{k}\,\tan(k\tau/2) \,.
\end{equation*}

Since $\mu(t)>0$ for $t>0$, we must have $0<k\tau<\pi$, thus $\tau_{\max}<\pi/k$.
\end{proof}

Before obtaining the expression for $f$ in terms of $\tau$, we show that solitons with a negative value of the $\cC$ constant have finite diameter in the $t$-direction, hence cannot be complete solutions, and must be excluded from the analysis

\begin{proposition}\label{prop:dilatonconstant}
Let $(\phi,\psi,f,k)$ be a gradient generalized Ricci soliton normalized as in \eqref{eq:normalizedinitial}. Then the solution has infinite diameter in the $t$-direction if and only if $\cC \ge 0$. In particular, for $\cC < 0$ the manifold has finite diameter in the $t$-direction and is incomplete.
\end{proposition}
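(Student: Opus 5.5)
The plan is to turn the dilaton equation into a first-order ODE for $v:=e^{f}$ in the variable $\tau$. Then $\tau_{\max}\le\pi/k$ is finite, and the $t$-length $\int_0^{\tau_{\max}} e^{-f}\,d\tau=\int_0^{\tau_{\max}} d\tau/v$ is finite or infinite according to how $v$ behaves at the end of the interval. The sign of $\cC$ should control that behaviour.

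\textbf{Step 1: a first-order relation.} Subtract the reduced dilaton equation \eqref{eq:conservationlawreduced} from \eqref{eq:f''} to eliminate $f''$. This gives
\begin{equation*}
2\Big(\frac{\phi'}{\phi}+\frac{\psi'}{\psi}\Big)f' - (f')^2 - 2\frac{\phi'\psi'}{\phi\psi} + \frac12 k^2e^{2f} + \cC = 0 .
\end{equation*}
Next pass to $\tau$ via \eqref{eq:tautderivatives}, using $\frac{\dot\phi}{\phi}=\frac12\big(\frac{\dot\cA}{\cA}+\frac{\dot\mu}{\mu}\big)$ and $\frac{\dot\psi}{\psi}=\frac12\big(\frac{\dot\cA}{\cA}-\frac{\dot\mu}{\mu}\big)$. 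Lemmas \ref{lemma:Atau} and \ref{lemma:mutau} give $\dot\cA/\cA=k\cot(k\tau)$ and $\dot\mu/\mu=1/\cA=k/\sin(k\tau)$, so $\frac{\dot\phi\dot\psi}{\phi\psi}=-k^2/4$. With $v=e^f$ and $\dot f=\dot v/v$, the relation becomes
\begin{equation*}
\dot v^2 - 2k\cot(k\tau)\,v\,\dot v - k^2 v^2 = \cC ,\qquad v(0)=1,
\end{equation*}
equivalently
\begin{equation*}
\dot v = k\cot(k\tau)\,v \pm \sqrt{\frac{k^2v^2}{\sin^2(k\tau)}+\cC}.
\end{equation*}
Smoothness at $\tau=0$ ($f'(0)=0$) selects the minus branch near $0$.

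\textbf{Step 2: the cases $\cC\ge0$.} For $\cC=0$ the ODE is linear:
\begin{equation*}
\dot v=k\,\frac{\cos(k\tau)-1}{\sin(k\tau)}\,v=-k\tan(k\tau/2)\,v,
\end{equation*}
so $v=\cos^2(k\tau/2)$. This vanishes quadratically at $\tau=\pi/k$, hence $\int d\tau/v=\infty$. For $\cC>0$ the square root is bounded below by $\sqrt{\cC}$. Comparison with the $\cC=0$ solution shows $v$ decreases and reaches $0$ at some $\tau_*\le\pi/k$, where $\dot v(\tau_*)=-\sqrt{\cC}\neq0$ if $\tau_*<\pi/k$. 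Thus $v$ vanishes at least linearly and the integral diverges logarithmically. If instead $\tau_*=\pi/k$, I will check directly that $v$ is still $O(\pi/k-\tau)$. In both subcases $\tau_{\max}=\tau_*$ and $t\to\infty$.

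\textbf{Step 3: the case $\cC<0$, the main obstacle.} Here I would use the reality constraint $k^2v^2/\sin^2(k\tau)\ge -\cC$, i.e. $v\ge \sqrt{-\cC}\,\sin(k\tau)/k$. This keeps $v$ away from $0$ on every compact subinterval of $(0,\pi/k)$, so the only possible source of divergence is the endpoint $\tau=\pi/k$. The hard part is to show that $v$ cannot vanish there fast enough to make $\int d\tau/v$ diverge, and to handle a possible switch of branch where the discriminant vanishes. I would first try to show directly that $\tau_{\max}<\pi/k$, which already forces $\int_0^{\tau_{\max}}d\tau/v$ to be finite since $v$ is then bounded below by a positive constant. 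Failing that, I would expand near $\tau=\pi/k$: setting $s=\pi/k-\tau$ and $v\sim a s^p$, the ODE forces an incompatible balance, or $p\le 1$ with a finite integral. This gives finite $t$-diameter, and hence incompleteness, since $t$ is arclength along the radial geodesics.
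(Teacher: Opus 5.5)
Your Steps 1 and 2 are sound. The first-order relation is exactly the identity $2k\cot(k\tau)\,y\dot y-\dot y^2+k^2y^2=-\cC$ used in the paper, and for $\cC>0$ the deferred subcase $\tau_*=\pi/k$ already follows from your comparison $v\le\cos^2(k\tau/2)$.

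The gap is Step 3, i.e.\ the whole $\cC<0$ direction, which you leave as a list of attempts. None of them works as stated:
\begin{enumerate}[label=(\roman*), leftmargin=*]
\item The bound $v\ge\sqrt{-\cC}\,\sin(k\tau)/k$ degenerates exactly at $\tau=\pi/k$, the only point that matters.
\item Proving $\tau_{\max}<\pi/k$ is hopeless. For $\cC<0$ the $\tau$-interval really is $[0,\pi/k)$; it ends because $\psi=\cos(k\tau/2)\to0$, not because $v\to0$.
\item Your asymptotic dichotomy is wrong at $p=1$, where $\int ds/s$ diverges.
\end{enumerate}
More fundamentally, the first-order relation by itself cannot settle the question. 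Differentiating it gives $(\dot v-k\cot(k\tau)v)(\ddot v-k\cot(k\tau)\dot v)=0$. The first factor vanishes precisely on the envelope $v_{\mathrm{env}}=\sqrt{-\cC}\,\sin(k\tau)/k$, which is itself a solution. The true solution is tangent to $v_{\mathrm{env}}$ at an interior point, where the square root is not Lipschitz, and the branch switch you worried about really occurs there. For example, with $k=1$, $\cC=-1$ the true solution is $v\equiv1$. But the $C^1$ curve equal to $1$ on $[0,\pi/2]$ and to $\sin\tau$ on $[\pi/2,\pi)$ also satisfies your relation with $v(0)=1$. It vanishes linearly at $\pi$, so $\int d\tau/v=\infty$.

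The missing idea is to keep \eqref{eq:f''} instead of eliminating it. Since $f'=\dot v$, $f''=v\ddot v$ and $\dot\phi\dot\psi/(\phi\psi)=-k^2/4$, the terms $-2\phi'\psi'/(\phi\psi)$ and $-\tfrac12k^2e^{2f}$ cancel. Equation \eqref{eq:f''} then becomes the linear equation $\ddot v=k\cot(k\tau)\,\dot v$, so $v=A+D\cos(k\tau)$ with $A+D=1$. Your relation collapses to the algebraic constraint $k^2(A^2-D^2)=-\cC$, whence $v(\pi/k)=A-D=-\cC/k^2$. For $\cC<0$ this gives $v\ge\min(1,|\cC|/k^2)>0$ on $[0,\pi/k]$, so $t\le(\pi/k)/\min(1,|\cC|/k^2)<\infty$.

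This is the paper's argument, and it handles $\cC\ge0$ at the same stroke. When $\cC=0$, $v$ has a double zero at $\pi/k$. When $\cC>0$, $v(\pi/k)<0$ forces an earlier simple zero.
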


\begin{proof}
From the Einstein equation $\eqref{eq:einsteinGSS}$ and the Maxwell equation $\eqref{eq:maxwellGSS}$, the metric components $\phi,\psi$ and the $H$-flux satisfy the reduced Einstein equation \eqref{eq:phi''}–\eqref{eq:f''}. As shown in Section \ref{sec:gradient generalized Ricci}, \eqref{eq:phi''} and \eqref{eq:psi''}, determine $\phi$ and $\psi$ uniquely in terms of the variable $\tau(t) = \int_0^t e^{f(s)} \,ds$, independently of the dilaton equation. Explicitly,
\begin{equation*}
\phi(\tau) = \frac{2}{k}\sin(k\tau/2),\qquad \psi(\tau) = \cos(k\tau/2),
\end{equation*}
and consequently $\cA(\tau) = \phi\psi = \frac{1}{k}\sin(k\tau)$. The function $\cA(\tau)$ is positive precisely for $0 < k\tau < \pi$, so $\tau$ can never exceed $\pi/k$; the maximal possible interval is $[0,\pi/k)$. The actual domain of $\tau$ will be $[0,\tau_{\max})$, where $\tau_{\max} \le \pi/k$ is determined by the condition $y(\tau) = e^{f(\tau)} > 0$.

Recall that $\cA'/\cA=\phi'/\phi+\psi'/\psi$. Then, using $d/d\tau=e^{-f}\,d/dt$, we compute
\begin{equation*}
f' = e^f\dot{f} \,, \qquad f'' = e^{2f}(\ddot f + \dot{f}^2) \,, \qquad \frac{\cA'}{\cA} = e^f \,\frac{\dot{\cA}}{\cA} = e^f k\cot(k\tau) \,.
\end{equation*}

Thus, we can write \eqref{eq:f''} as
\begin{equation*}
0 = f'' - \left(\frac{\phi'}{\phi} + \frac{\psi'}{\psi}\right) f' + 2\frac{\phi'\psi'}{\phi\psi} + \frac{1}{2}k^2 e^{2f} = e^{2f}(\ddot{f} + (\dot{f})^2) - e^{2f} k\cot(k\tau) \,\dot{f} \,,
\end{equation*}
hence
\begin{equation*}
0 = e^f(\ddot{f} + (\dot{f})^2) - e^f k\cot(k\tau) \,\dot{f} = \frac{d^2}{d\tau^2}(e^f) + k\cot(k\tau) \,\frac{d}{d\tau}(e^f) \,.
\end{equation*}

Introducing the variable $y=e^f$, we obtain the linear second-order ODE
\begin{equation*}
\ddot{y} - k\cot(k\tau)\,\dot{y} = 0 \,,
\end{equation*}
with initial conditions $y(0)=1,\ \dot{y}(0)=0$. The general solution is
\begin{equation}\label{eq:ysolution}
y(\tau) = A + D\cos(k\tau) \,, \qquad A+D=1 \,.
\end{equation}

Now, expressing \eqref{eq:conservationlawreduced} in terms of $\tau$ yields
\begin{equation*}
- e^{2f}\left(\ddot{f} + k\cot(k\tau)\dot{f} + k^2\right) = \cC \,,
\end{equation*}
and, using $y=e^f$ and
\begin{equation*}
f = \ln y \,, \qquad \dot{f} = \frac{\dot{y}}{y} \,, \qquad \ddot{f} = \frac{\ddot{y}}{y} - \frac{(\dot{y})^2}{y^2} \,,
\end{equation*}
we can write it as
\begin{equation*}
y\ddot{y} - (\dot{y})^2 + k\cot(k\tau)y\dot{y} + k^2y^2 = - \cC \,.
\end{equation*}

Since $\ddot{y} - k\cot(k\tau)\,\dot{y} = 0$, we can further simplify the equation to
\begin{equation*}
2k\cot(k\tau)y\dot{y} - (\dot{y})^2 + k^2y^2 = - \cC \,.
\end{equation*}

Finally, expanding using \eqref{eq:ysolution} and simplifying, we find
\begin{equation*}
k^2(A^2 - D^2) = - \cC \,,
\end{equation*}
and, using $A+D=1$, we simplify it to
\begin{equation}\label{eq:Dcondition}
k^2(1 - 2D) = - \cC \,.
\end{equation}

Hence
\begin{equation*}
D = \frac12\left(1 + \frac{\cC}{k^2}\right) \,, \qquad A = \frac12\left(1 - \frac{\cC}{k^2}\right) \,,
\end{equation*}
and therefore
\begin{equation}\label{eq:ysolutionfinal}
y(\tau) = \frac12\left(1-\frac{\cC}{k^2}\right) + \frac12\left(1+\frac{\cC}{k^2}\right)\cos(k\tau) \,.
\end{equation}

We now determine the completeness of the metric from the behavior of $t(\tau) = \int_0^\tau \frac{ds}{y(s)}$. If $\cC < 0$, then $y(\tau)=\tfrac12\left(1+\frac{|\cC|}{k^2}\right) + \tfrac12\left(1-\frac{|\cC|}{k^2}\right)\cos(k\tau)$. If $|\cC|\le k^2$ we have $y(\tau)\ge \frac{|\cC|}{k^2}>0$; if $|\cC|>k^2$ we have $y(\tau)\ge1$. In either case $y$ is bounded away from zero on $[0,\pi/k]$. Consequently the integral $t(\tau) = \int_0^\tau \frac{ds}{y(s)}$ converges to a finite limit $t_{\max}$ as $\tau\to\pi/k$. Thus the geodesic distance $t$ is bounded; the manifold has finite diameter and is incomplete.

If $\cC = 0$, then $A = D = 1/2$, and $y(\tau) = \frac12(1+\cos(k\tau)) = \cos^2(k\tau/2)$. Here $\tau_{\max} = \pi/k$, and
\begin{equation*}
t(\tau) = \int_0^\tau \frac{ds}{\cos^2(ks/2)} = \frac{2}{k}\tan(k\tau/2) \longrightarrow \infty \quad \text{as } \tau \to \pi/k \,,
\end{equation*}
so the manifold is complete.

Finally, if $\cC > 0$, then $D > A$ and $y(0)=1>0$, while $y(\pi/k) = A - D = -\cC/k^2 < 0$. By continuity there exists a unique $\tau_0 \in (0,\pi/k)$ such that $y(\tau_0)=0$. Since $D \neq 0$, this zero is simple. As $\tau \to \tau_0^-$, $1/y(\tau)$ has a simple pole, hence
\begin{equation*}
t(\tau) = \int_0^\tau \frac{ds}{y(s)} \longrightarrow \infty \quad \text{as } \tau \to \tau_0^- \,.
\end{equation*}
Therefore the geodesic distance $t$ attains all values in $[0,\infty)$, and the manifold is geodesically complete. The domain of $\tau$ is $[0,\tau_0)$, with $\tau_0 = \frac{1}{k}\arccos(-A/D) < \pi/k$.

Combining the three cases, we conclude that the solution is complete precisely when $\cC \ge 0$, and incomplete for $\cC < 0$. This finishes the proof.
\end{proof}

Now, we can find the expression for $f$ in terms of $\tau$.

\begin{proposition}\label{lemma:ftau}
Let $(\phi,\psi,f,k)$ be a smooth steady gradient generalized Ricci soliton with diagonal cylindrical symmetry, normalized as in \eqref{eq:normalizedinitial}. Fix $\cC\ge 0$. Then, in terms of $\tau$, the function $f$ is given by
\begin{equation}\label{eq:fsolutionCgeneral}
f(\tau) = \ln\left[\frac{1}{2}\left(1-\frac{\cC}{k^2}\right) + \frac{1}{2}\left(1+\frac{\cC}{k^{2}}\right)\cos(k\tau) \right] \,,
\end{equation}

with $0\le\tau<\tau_{\max}$, where
\begin{equation}\label{eq:tmax}
\tau_{\max} = \frac{1}{k}\arccos\left(-\frac{1-\cC/k^{2}}{1+\cC/k^{2}}\right) \,.
\end{equation}
\end{proposition}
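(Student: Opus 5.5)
The plan is to reuse the computation already carried out in the proof of Proposition \ref{prop:dilatonconstant}, where everything is expressed in the variable $\tau$ of \eqref{eq:tau}.

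First, I rewrite the $f$-equation \eqref{eq:f''} using \eqref{eq:tautderivatives} together with $\dot{\cA}/\cA = k\cot(k\tau)$ from Lemma \ref{lemma:Atau}. One must check that the term $2\phi'\psi'/(\phi\psi)+\tfrac12 k^2e^{2f}$ matches the $\tau$-form of the metric equations from Lemma \ref{lemma:mutau}, i.e. that the explicit $\phi,\psi$ in $\tau$ make it cancel. This turns \eqref{eq:f''} into the linear equation
\begin{equation*}
\ddot y - k\cot(k\tau)\,\dot y = 0
\end{equation*}
for $y=e^f$. Its general solution is $y = A + D\cos(k\tau)$, since $\dot y = c\sin(k\tau)$. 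The normalization $f(0)=0$ forces $A+D=1$, and $f'(0)=0$ is automatic.

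Second, I substitute $y$ into the reduced dilaton equation \eqref{eq:conservationlawreduced}, rewritten in $\tau$, to pin down $D$. The equation $\ddot y = k\cot(k\tau)\dot y$ removes the second derivatives. A direct expansion then gives
\begin{equation*}
2k\cot(k\tau)\,y\dot y - \dot y^2 + k^2y^2 = k^2(A^2-D^2),
\end{equation*}
and the $\tau$-dependence must cancel identically; verifying this cancellation is the only genuine computation. Combining $k^2(A^2-D^2)=-\cC$ with $A+D=1$ yields $D=\tfrac12(1+\cC/k^2)$ and $A=\tfrac12(1-\cC/k^2)$. Taking the logarithm of $y$ gives \eqref{eq:fsolutionCgeneral}.

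Finally, I identify $\tau_{\max}$. Since $t=\int_0^\tau ds/y(s)$, the variable $\tau$ ranges over the maximal interval containing $0$ on which $y>0$, while also $\tau<\pi/k$ by Lemma \ref{lemma:Atau}. For $\cC>0$, $y$ is strictly decreasing on $(0,\pi/k)$ with $y(0)=1$ and $y(\pi/k)<0$. Its unique zero $\cos(k\tau_0)=-A/D$ gives \eqref{eq:tmax}, and the simple-pole argument of Proposition \ref{prop:dilatonconstant} shows $t\to\infty$ there, so this zero is indeed $\tau_{\max}$. For $\cC=0$, formula \eqref{eq:tmax} gives $\arccos(-1)/k=\pi/k$, consistent with $y=\cos^2(k\tau/2)$ and $t=\tfrac2k\tan(k\tau/2)\to\infty$.

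The main obstacle is the bookkeeping in the first step, namely showing that the inhomogeneous terms of \eqref{eq:f''} cancel exactly in the $\tau$ variable. If this fails, I would fall back on the dilaton equation alone: with $\cA$ known, it is a first-order Riccati-type equation for $\dot y/y$, and I would solve that directly with $y(0)=1$ and $\dot y(0)=0$.
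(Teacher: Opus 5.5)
Your proposal is correct and follows essentially the same route as the paper: you reduce \eqref{eq:f''} in the variable $\tau$ to $\ddot y-k\cot(k\tau)\,\dot y=0$, impose the dilaton equation to get $k^2(A^2-D^2)=-\cC$ with $A+D=1$, and take $\tau_{\max}$ to be the first zero of $y$. The cancellation you flag as the main obstacle does hold, since Lemmas~\ref{lemma:Atau} and~\ref{lemma:mutau} give $\dot\phi/\phi=\tfrac{k}{2}\cot(k\tau/2)$ and $\dot\psi/\psi=-\tfrac{k}{2}\tan(k\tau/2)$, hence $2\phi'\psi'/(\phi\psi)=-\tfrac{k^2}{2}e^{2f}$ exactly offsets $\tfrac12k^2e^{2f}$, so no fallback is needed (and your proposed fallback would not close for $\cC\neq0$, because the $\tau$-form of the dilaton equation, $\dot w+k\cot(k\tau)\,w+k^2=-\cC/y^2$ with $w=\dot y/y$, still involves $y$ itself).
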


\begin{proof}
From the reduced Einstein equations \eqref{eq:phi''}--\eqref{eq:f''} (which are independent of $\cC$) and the definition of $\tau$, one obtains the following equation for $y(\tau):=e^{f(\tau)}$ (see Proposition \ref{prop:dilatonconstant} for the derivation):
\begin{equation}\label{eq:yode}
\ddot{y} - k\cot(k\tau)\,\dot{y}=0,\qquad
y(0)=1,\,\, \dot{y}(0)=0 \,.
\end{equation}

Integrating yields $y(\tau)=A+D\cos(k\tau)$, with $A+D=1$.

The dilaton equation \eqref{eq:dilatonC} then reduces to the algebraic relation
\begin{equation*}
k^{2}(A^{2}-D^{2}) = -\cC \,,
\end{equation*}

which, together with $A+D=1$, gives $A=\frac12\big(1-\frac{\cC}{k^{2}}\big)$, $D=\frac12\big(1+\frac{\cC}{k^{2}}\big)$.  Hence
\begin{equation*}
y(\tau)=\tfrac12\big(1-\tfrac{\cC}{k^{2}}\big)+\tfrac12\big(1+\tfrac{\cC}{k^{2}}\big)\cos(k\tau),
\end{equation*}
and taking the logarithm yields \eqref{eq:fsolutionCgeneral}.

For $\cC=0$, $y(\tau)=\frac12(1+\cos k\tau)=\cos^{2}(k\tau/2)>0$ for all $0\le\tau<\pi/k$, so $\tau_{\max}=\pi/k$.  
For $\cC>0$, the coefficient $A$ may be negative, but $y(0)=1>0$.  The function $y(\tau)$ vanishes at the first positive root $\tau_{0}$ of $A+D\cos(k\tau)=0$, i.e. when $\cos(k\tau)=-A/D$.  Since $D>A$, this root lies in $(0,\pi/k)$ and equals $\frac{1}{k}\arccos(-A/D)$.  By definition $\tau_{\max}$ is this root, and $y(\tau)>0$ on $[0,\tau_{\max})$.
\end{proof}

Finally, we solve $t$ in terms of $\tau$.

\begin{corollary}\label{cor:taut}
Let the hypotheses be as in Proposition \ref{lemma:ftau}.  The geodesic distance $t$ as a function of $\tau$ is
\begin{equation}\label{eq:tfromtau}
t(\tau)=\int_{0}^{\tau}\frac{ds}{y(s)}
= \begin{cases}
\frac{2}{k}\tan(k\tau/2), & \cC=0 \,,\\[8pt]
\frac{1}{\sqrt{\cC}}\,
\ln\left(\frac{1+\frac{\sqrt{\cC}}{k}\tan(k\tau/2)}
{1-\frac{\sqrt{\cC}}{k}\tan(k\tau/2)}\right) \,, & \cC>0 \,,
\end{cases}
\end{equation}

with $0\le\tau<\tau_{\max}$ for $\tau_{\max}$ defined as in \eqref{eq:tmax}. In all cases $t(\tau)\to\infty$ as $\tau\to\tau_{\max}^{-}$; hence the original coordinate $t$ ranges over $[0,\infty)$.
\end{corollary}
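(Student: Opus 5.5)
The plan is to integrate $dt/d\tau = e^{-f} = 1/y$ directly, using the explicit form of $y$ from Proposition \ref{lemma:ftau}, and to substitute $w=\tan(k\tau/2)$. The half-angle identity $\cos(k\tau) = (1-w^2)/(1+w^2)$, together with $d\tau = \frac{2}{k}\,\frac{dw}{1+w^2}$, turns the integrand into a rational function of $w$.

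First I simplify $y$. With $A=\tfrac12(1-\cC/k^2)$ and $D=\tfrac12(1+\cC/k^2)$,
\begin{equation*}
y = A + D\,\frac{1-w^2}{1+w^2} = \frac{(A+D) + (A-D)w^2}{1+w^2} = \frac{1 - (\cC/k^2)\,w^2}{1+w^2} \,.
\end{equation*}
Hence
\begin{equation*}
\frac{d\tau}{y} = \frac{2}{k}\,\frac{dw}{1-(\cC/k^2)w^2} \,,
\end{equation*}
and the factors $1+w^2$ cancel exactly.

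Then I split into the two cases.
\begin{itemize}
\item For $\cC=0$ the integrand is $\frac{2}{k}\,dw$. This gives $t=\frac{2}{k}\tan(k\tau/2)$, consistent with the computation already made in Proposition \ref{prop:dilatonconstant}.
\item For $\cC>0$, set $a=\sqrt{\cC}/k$. Then $\int_0^{w}\frac{2}{k}\frac{dv}{1-a^2v^2} = \frac{2}{k}\cdot\frac{1}{a}\operatorname{artanh}(aw)$, which equals $\frac{1}{\sqrt{\cC}}\ln\frac{1+aw}{1-aw}$. This is exactly the claimed formula. The computation is valid as long as $aw<1$, i.e.\ as long as $y>0$.
\end{itemize}

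It remains to check the range, and the only delicate step is matching the endpoint $\tau_{\max}$ of \eqref{eq:tmax} with $aw\to1^-$. Since $y=(1-a^2w^2)/(1+w^2)$, the first zero of $y$ occurs exactly where $\tan(k\tau/2)=k/\sqrt{\cC}$. I will verify that this agrees with $\cos(k\tau_{\max}) = -A/D = -(k^2-\cC)/(k^2+\cC)$ via the half-angle formula: $(1-w^2)/(1+w^2)$ with $w^2=k^2/\cC$ gives $(\cC-k^2)/(\cC+k^2)$, as required.

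Because $k\tau/2\in[0,\pi/2)$ on this interval, $w$ increases monotonically from $0$, so the substitution is legitimate. Then $\frac{1+aw}{1-aw}\to\infty$, so $t\to\infty$. For $\cC=0$, $\tau_{\max}=\pi/k$ and $\tan(k\tau/2)\to\infty$, so again $t\to\infty$.

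Finally, $t(\tau)$ is strictly increasing and continuous with $t(0)=0$, so $t$ ranges over $[0,\infty)$.
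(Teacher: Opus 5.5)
Your proof is correct and follows the paper's route: you integrate $dt/d\tau = 1/y$ by the half-angle substitution $w=\tan(k\tau/2)$, which is how the standard antiderivative of $1/(A+D\cos ks)$ quoted in the paper is obtained. Your version is cleaner. You simplify first with $A+D=1$ and $D-A=\cC/k^2$, which avoids the paper's intermediate expression for $\sqrt{D\pm A}$; those square roots are simply $1$ and $\sqrt{\cC}/k$. You also explicitly check that $aw\to 1^-$ corresponds to $\tau_{\max}$ in \eqref{eq:tmax}, which the paper only asserts.
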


\begin{proof}
From $dt/d\tau = e^{-f}=1/y(\tau)$, we integrate using the expression for $y$.  
For $\cC=0$, $y(\tau)=\cos^{2}(k\tau/2)$, giving the elementary integral $t = \frac{2}{k}\tan(k\tau/2)$.  
For $\cC>0$, substitute $A,D$ and use the standard integral
\begin{equation*}
\int_0^\tau \frac{ds}{A+D\cos(ks)} = \frac{1}{k\sqrt{D^{2}-A^{2}}}\,
\ln\left|\frac{\sqrt{D+A}+\sqrt{D-A}\tan(k\tau/2)}{\sqrt{D+A}-\sqrt{D-A}\tan(k\tau/2)}\right| \,.
\end{equation*}

Noting $D^{2}-A^{2}= \cC/k^{2}$ and $\sqrt{D\pm A}= \frac{1}{\sqrt{2}}\big( \sqrt{1+\cC/k^{2}} \pm \sqrt{1-\cC/k^{2}} \big)^{1/2}= \frac{1}{\sqrt{k}}(\sqrt{k^{2}+\cC}\pm\sqrt{k^{2}-\cC})^{1/2}$, after simplification one obtains the stated logarithmic expression.

In both cases the denominator inside the logarithm vanishes exactly at $\tau=\tau_{\max}$, making $t\to\infty$ as $\tau\to\tau_{\max}^{-}$.  Thus $t$ covers the whole half‑line and completeness follows.
\end{proof}

Altogether, we obtain the explicit solutions of $\phi$, $\psi$, $f$ in terms of $t$.

\begin{proposition}\label{prop:explicitphipsift}
Let $(g,H,f)$ be a diagonal cylindrical steady gradient generalized Ricci soliton with normalization \eqref{eq:normalizedinitial} and constants $k>0$, $\cC\ge 0$. Then the triple is determined by

\begin{equation}\label{eq:explicitsolutionstC0}
\phi(t) = \frac{t}{\sqrt{1 + (kt/2)^2}} \,, \qquad 
\psi(t) = \frac{1}{\sqrt{1 + (kt/2)^2}} \,, \qquad 
f(t) = - \ln\big[ 1 + (kt/2)^2 \big]
\end{equation}

if $\cC=0$, and

\begin{equation}\label{eq:explicitsolutionstCpos}
\begin{gathered}
\phi(t) = \frac{2}{\sqrt{\cC}}\,\frac{\tanh(\sqrt{\cC}\,t/2)}{\sqrt{1 + \frac{k^{2}}{\cC}\tanh^{2}(\sqrt{\cC}\,t/2)}} \,, \qquad \psi(t) = \frac{1}{\sqrt{1 + \frac{k^{2}}{\cC}\tanh^{2}(\sqrt{\cC}\,t/2)}} \, \\[6pt]
f(t) = \ln\left[\frac{1}{2}\left(1-\frac{\cC}{k^{2}}\right) + \frac{1}{2}\left(1+\frac{\cC}{k^{2}}\right)\frac{1 - \frac{k^{2}}{\cC}\tanh^{2}(\sqrt{\cC}\,t/2)}{1 + \frac{k^{2}}{\cC}\tanh^{2}(\sqrt{\cC}\,t/2)}
\right] \,.
\end{gathered}
\end{equation}

if $\cC>0$. In both cases the solutions are smooth for all $t\ge 0$ and satisfy the normalized initial conditions. Moreover, the formulas for $\cC>0$ reduce to those for $\cC=0$ in the limit $\cC\to 0$.
\end{proposition}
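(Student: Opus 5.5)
The plan is to convert the $\tau$-parametrized solutions already obtained into functions of $t$ by inverting Corollary \ref{cor:taut}. First, combine Lemma \ref{lemma:Atau} and Lemma \ref{lemma:mutau} through \eqref{eq:phipsimutau}. Writing $T:=\tan(k\tau/2)$, we have $\cA=\frac1k\sin(k\tau)$ and $\mu=\frac2k T$, so
\begin{equation*}
\phi=\sqrt{\cA\mu}=\frac{2}{k}\sin(k\tau/2)=\frac{2}{k}\frac{T}{\sqrt{1+T^2}} \,,\qquad \psi=\sqrt{\cA/\mu}=\cos(k\tau/2)=\frac{1}{\sqrt{1+T^2}} \,.
\end{equation*}
Proposition \ref{lemma:ftau} gives $e^f=y=A+D\cos(k\tau)$ with $\cos(k\tau)=(1-T^2)/(1+T^2)$. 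Hence everything is a rational expression in $T$, and it remains to express $T$ as a function of $t$.

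For $\cC=0$, Corollary \ref{cor:taut} gives $T=kt/2$ directly. Substituting yields $\phi=t/\sqrt{1+(kt/2)^2}$ and $\psi=1/\sqrt{1+(kt/2)^2}$. Also $y=\cos^2(k\tau/2)=1/(1+T^2)$, which gives $f=-\ln[1+(kt/2)^2]$.

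For $\cC>0$, set $s=\sqrt{\cC}/k$. Exponentiating the formula of Corollary \ref{cor:taut} gives $(1+sT)/(1-sT)=e^{\sqrt{\cC}\,t}$, so $sT=\tanh(\sqrt{\cC}\,t/2)$, i.e. $T=\frac{k}{\sqrt{\cC}}\tanh(\sqrt{\cC}\,t/2)$. Then $T^2=\frac{k^2}{\cC}\tanh^2(\sqrt{\cC}\,t/2)$, and substituting into the expressions for $\phi$, $\psi$ and $e^f$ reproduces \eqref{eq:explicitsolutionstCpos}. For instance, $\frac{2}{k}T=\frac{2}{\sqrt{\cC}}\tanh(\sqrt{\cC}\,t/2)$.

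The step needing care, and the main obstacle, is checking that this inversion is a genuine bijection between $[0,\tau_{\max})$ and $[0,\infty)$. Using \eqref{eq:tmax}, $\cos(k\tau_{\max})=(\cC-k^2)/(\cC+k^2)$. Then
\begin{equation*}
T(\tau_{\max})^2=\frac{1-\cos(k\tau_{\max})}{1+\cos(k\tau_{\max})}=\frac{k^2}{\cC} \,,
\end{equation*}
so $sT$ increases monotonically from $0$ to $1$ on $[0,\tau_{\max})$. This matches $\tanh$ ranging over $[0,1)$ as $t$ ranges over $[0,\infty)$. In particular the argument of the logarithm defining $f$ stays positive, since $y>0$ there by Proposition \ref{lemma:ftau}.

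Smoothness then follows from the explicit formulas. The functions $\tanh$ and $t$ are odd and real-analytic, and the square-root denominators are even, analytic and bounded below by a positive constant. Hence $\phi$ is odd with $\phi'(0)=1$, while $\psi$ and $f$ are even with $\psi(0)=1$ and $f(0)=0$. So Lemma \ref{lemma:smoothnessconditions} and the normalization \eqref{eq:boundaryconditions} hold.

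Finally, for the limit $\cC\to0$, use $\frac{1}{\sqrt{\cC}}\tanh(\sqrt{\cC}\,t/2)\to t/2$ and $\frac{k^2}{\cC}\tanh^2(\sqrt{\cC}\,t/2)\to (kt/2)^2$. Together with $\cC/k^2\to0$ in the coefficients of $f$, the formulas \eqref{eq:explicitsolutionstCpos} converge pointwise (indeed locally uniformly with derivatives) to \eqref{eq:explicitsolutionstC0}.

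Uniqueness is inherited from the earlier lemmas. Each of $\cA$, $\mu$ and $y$ was uniquely determined by $k$, $\cC$ and the initial conditions, and the change of variable $t\mapsto\tau$ is a diffeomorphism.
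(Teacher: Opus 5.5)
Your proposal is correct and follows the paper's route: you write $\phi=\frac{2}{k}\sin(k\tau/2)$, $\psi=\cos(k\tau/2)$ and $e^f=A+D\cos(k\tau)$ in terms of $\tan(k\tau/2)$, then invert Corollary \ref{cor:taut} to get $\tan(k\tau/2)=kt/2$, respectively $\tan(k\tau/2)=\frac{k}{\sqrt{\cC}}\tanh(\sqrt{\cC}\,t/2)$. Your extra checks usefully make explicit steps the paper only asserts: that $\frac{\sqrt{\cC}}{k}\tan(k\tau/2)$ sweeps exactly $[0,1)$ on $[0,\tau_{\max})$, and the parity argument for smoothness at the axis.
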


\begin{proof}
Using \eqref{eq:phipsimutau} together with Lemmas \ref{lemma:Atau} and \ref{lemma:mutau} we obtain the metric components in terms of $\tau$:
\begin{equation}\label{eq:metricfunctions}
\phi(\tau) = \frac{2}{k}\sin(k\tau/2) \,, \qquad
\psi(\tau) = \cos(k\tau/2) \,,
\end{equation}

valid for $0\le\tau<\tau_{\max}$, where $\tau_{\max}$ is given in Proposition \ref{lemma:ftau}. The function $f(\tau)$ is provided by Proposition \ref{lemma:ftau} as well.

The relation between the physical distance $t$ and the parameter $\tau$ is given by Corollary \ref{cor:taut}.  For $\cC=0$ we have the elementary inversion
\begin{equation*}
\tan(k\tau/2) = kt/2,
\qquad
\sin(k\tau/2) = \frac{k t/2}{\sqrt{1+(kt/2)^{2}}},\quad
\cos(k\tau/2) = \frac{1}{\sqrt{1+(kt/2)^{2}}}.
\end{equation*}
Substituting these into $\phi(\tau),\psi(\tau),f(\tau)$ yields \eqref{eq:explicitsolutionstC0}.

For $\cC>0$, Corollary \ref{cor:taut} gives
\begin{equation*}
\tan(k\tau/2) = \frac{k}{\sqrt{\cC}}\,\tanh(\sqrt{\cC}\,t/2) \,.
\end{equation*}
Because $0\le k\tau/2 < \pi/2$, both $\sin(k\tau/2)$ and $\cos(k\tau/2)$ are positive and can be expressed as
\begin{equation*}
\sin(k\tau/2) = \frac{\tan(k\tau/2)}{\sqrt{1+\tan^2(k\tau/2)}} \,, \qquad
\cos(k\tau/2) = \frac{1}{\sqrt{1+\tan^2(k\tau/2)}} \,.
\end{equation*}

Inserting the formula for the tangent, we obtain the two first formulas in \eqref{eq:explicitsolutionstCpos}. For the dilaton, we use $f = \ln y$ with $y(\tau) = A + D\cos(k\tau)$.  Writing $\cos(k\tau) = 2\cos^{2}(k\tau/2)-1$ and substituting the expression for $\cos(k\tau/2)$ yields the formula in \eqref{eq:explicitsolutionstCpos} after elementary algebra.

As $t\to\infty$, $\tanh(\sqrt{\cC}\,t/2)\to 1$, and one checks that $\phi(t)$ and $\psi(t)$ approach finite limits while $f(t)\to -\infty$ linearly; the metric is smooth for all finite $t$.  The limit $\cC\to 0$ is verified by expanding $\tanh(u)\sim u$ and the square roots, which reproduces \eqref{eq:explicitsolutionstC0}.  This completes the proof.
\end{proof}

\begin{remark}
There are no non-trivial solitons with constant $f$. Either if $\cC=0$ or $\cC>0$, by Equations \eqref{eq:explicitsolutionstC0} and \eqref{eq:explicitsolutionstCpos} we see that $f(t)$ is constant only if $k=0$ (hence $H=0)$, when it takes the value $f=\ln(1)=0$.
\end{remark}

\begin{remark}\label{rem:sanitycheck}
A useful sanity check for the computations in this work is to substitute the explicit solutions of Proposition \ref{prop:explicitphipsift} into the reduced Einstein and dilaton equations \eqref{eq:phi''}--\eqref{eq:conservationlawreduced} (recall that the Maxwell equation has been solved explicitly in Section \ref{sec:reductionmaxwell}).

For simplicity, we show the case $\cC=0$. Set
\begin{equation*}
F(t) = 1 + \left(\frac{kt}{2}\right)^2 \,, \qquad 
F'(t) = \frac{k^2}{2} \,t \,, \qquad 
F''(t) = \frac{k^2}{2} \,.
\end{equation*}

Then, we compute:
\begin{align*}
\phi(t) &= tF^{-1/2} \,, &\qquad 
\phi'(t) &= F^{-3/2} \,, &\qquad 
\phi''(t) &= - \frac{3k^2t}{4}\,F^{-5/2} \,, \\
\psi(t) &= F^{-1/2} \,, &\qquad 
\psi'(t) &= - \frac{k^2t}{4}\,F^{-3/2} \,, &\qquad 
\psi''(t) &= \left(-\frac{k^2}{4}+\frac{k^4t^2}{8}\right) F^{-5/2} \,, \\
f(t) &= - \ln F \,, &\qquad 
f'(t) &= - \frac{k^2t}{2}F^{-1} \,, &\qquad 
f''(t) &= - \frac{k^2(4-k^2t^2)}{8}F^{-2} \,.
\end{align*}

Substituting into the reduced gradient generalized Ricci system yields
\begin{align*}
- \frac{3k^2t}{4}\,F^{-5/2} 
&= \frac{k^2t}{4}\, F^{-5/2} - \frac{k^2t}{2} F^{-5/2} - \frac{k^2 t}{2}F^{-5/2} = - \frac{3k^2t}{4}\,F^{-5/2} \,, \\
\left(-\frac{k^2}{4}+\frac{k^4t^2}{8}\right) \,F^{-5/2} 
&= \frac{k^2}{4}\,F^{-5/2} + \frac{k^4t^2}{8}\,F^{-5/2} - \frac{k^2}{2} F^{-5/2} = \left(-\frac{k^2}{4}+\frac{k^4t^2}{8}\right) \,F^{-5/2} \,, \\
- \frac{k^2(4-k^2t^2)}{8} \,F^{-2} 
&= - \left(\frac{1}{t} - \frac{k^2t}{4}\right) \frac{k^2t}{2} \,F^{-2} + \frac{k^2t}{2t} \,F^{-2} - \frac{k^2}{2} F^{-2} = - \frac{k^2(4-k^2t^2)}{8} \,F^{-2} \,, \\
- \frac{k^2(4-k^2t^2)}{8} \,F^{-2} 
&= \left( \frac{1}{t} - \frac{k^2t}{4} \right) \frac{k^2t}{2} \,F^{-2} + \frac{k^4t^2}{4} \,F^{-2} - k^2 F^{-2} = - \frac{k^2(4-k^2t^2)}{8} \,F^{-2} \,.
\end{align*}

Therefore, all reduced gradient generalized Ricci equations hold with our solution. Remarkably, the solution satisfies Lemma \ref{lemma:smoothnessconditions}.
\end{remark}


\subsection{Completeness of solutions}

We now show that the solutions obtained in Proposition \ref{prop:explicitphipsift} are complete.

\begin{proof}[Proof of Theorem \ref{thm:main}]
Let $p_0 = (0,0,0)$ (by translation invariance we may assume $z=0$ without loss of generality). For any point $q = (t,\theta,z)$ and any curve $\gamma(s) = (t(s),\theta(s),z(s))$ from $p_0$ to $q$, we have
\begin{equation*}
L(\gamma) = \int |\dot\gamma| \, ds \ge \int |\dot t| \, ds \ge \Big|\int \dot t \, ds\Big| = t \,,
\end{equation*}
hence $d(p_0,q) \ge t$. In particular, if $q \in B_R(p_0)$ then $t \le d(p_0,q) \le R$.

Now fix $q \in B_R(p_0)$ and let $\gamma$ be any curve from $p_0$ to $q$ with length $L = L(\gamma)$. 
For each $s$, the distance from $p_0$ to $\gamma(s)$ satisfies $d(p_0,\gamma(s)) \le L(\gamma|_{[0,s]}) \le L$. 
Moreover, since $d(p_0,\gamma(s)) \ge t(s)$, we obtain $t(s) \le L$ for all $s$. From the explicit expressions for $\psi(t)$ in Proposition \ref{prop:explicitphipsift}, one verifies that $\psi$ is strictly decreasing on $[0,\infty)$ for every $\cC\ge 0$. Hence $\psi(t(s)) \ge \psi(L)$. Then
\begin{equation*}
L = \int |\dot{\gamma}| \,ds \ge \int \psi(t(s))|\dot{z}| \,ds \ge \psi(L) \int |\dot{z}| \,ds \ge \psi(L) \,|z| \,.
\end{equation*}

Thus $L/\psi(L) \ge |z|$. The function $L \mapsto L/\psi(L)$ is strictly increasing: for $\cC=0$ this follows from $L/\psi(L)=L\sqrt{1+(kL/2)^2}$, while for $\cC>0$ it follows from the expression $L/\psi(L)=L\sqrt{1+\frac{k^2}{\cC}\tanh^2(\frac{\sqrt{\cC}}{2}L)}$, whose derivative is easily seen to be positive. Taking the infimum over all curves from $p_0$ to $q$, we obtain
\begin{equation*}
\frac{d(p_0,q)}{\psi(d(p_0,q))} \ge |z| \,,
\end{equation*}
that is,
\begin{equation*}
d(p_0,q) \ge \psi(d(p_0,q)) \,|z| \,.
\end{equation*}

Since $d(p_0,q) \le R$ and $\psi$ is decreasing, $\psi(d(p_0,q)) \ge \psi(R)$, so
\begin{equation*}
|z| \le \frac{R}{\psi(R)} \,.
\end{equation*}

Hence every point $q\in B_R(p_0)$ satisfies $t\in[0,R]$, $\theta\in S^1$, and $|z|\le R/\psi(R)$. Therefore $B_R(p_0)$ is contained in the compact set $[0,R]\times S^1\times[-R/\psi(R),\,R/\psi(R)]$. As a closed subset of a compact set, it is compact. By the Hopf–Rinow theorem, $(\R^3,g)$ is geodesically complete.
\end{proof}


\section{Asymptotic geometry and curvature decay}
\label{sec:asymptotics}

Having obtained explicit formulas for the soliton in terms of the geodesic distance $t$ in Proposition \ref{prop:explicitphipsift}, we now describe its geometry at infinity. The asymptotic behaviour differs qualitatively depending on the value of $\cC$. We treat the two cases separately.

\subsection{Case $\cC=0$}

For $\cC=0$ the metric and $f$ functions are
\begin{equation*}
\phi(t)=\frac{t}{\sqrt{1+(kt/2)^2}},\qquad 
\psi(t)=\frac{1}{\sqrt{1+(kt/2)^2}},\qquad 
f(t)=-\ln\big(1+(kt/2)^2\big).
\end{equation*}
All three functions are smooth for all $t\ge 0$ and their expansions as $t\to\infty$ are readily obtained.

First, we consider the area of a principal orbit per unit length in the $z$-direction, $\cS:=2\pi\cA=2\pi\phi\psi$. This represents the area of the cylinder slice with $z\in[0,1]$. Then:
\begin{equation*}
\cS(t) = 2\pi\phi(t)\psi(t) = \frac{2\pi t}{1+(kt/2)^2} \sim \frac{8\pi}{k^2 t} \quad (t\to\infty) \,.
\end{equation*}

Hence, the area of the principal orbits decays like $t^{-1}$, meaning that the cylinders become arbitrarily thin as $t\to\infty$, leading to a non‑compact end with infinite volume but only logarithmic volume growth. More precisely, the volume of the region  
$\{(t,\theta,z): 0\le t\le T,\, 0\le\theta\le2\pi, \, 0\le z\le 1\}$ is  
$\int_0^T \cS(t)\,dt$, whence we obtain
\begin{equation*}
\int_0^T \cS(t) \,dt \sim \frac{8\pi}{k^2}\ln T  \quad (t\to\infty) \,,
\end{equation*}
so the total volume diverges logarithmically.

Second, since $|H|_{g}^{2}=k^{2}e^{2f}$ and $e^{2f}=1/(1+(kt/2)^{2})^{2}$,
\begin{equation*}
|H|_{g}^{2}= \frac{k^{2}}{\left(1+(kt/2)^{2}\right)^{2}} \sim \frac{16}{k^{2}t^{4}} \qquad (t\to\infty) \,.
\end{equation*}

Thus the $H$-flux decays very rapidly, like $t^{-4}$.

Now, for the warped-product metric in \eqref{eq:ansatz}, the sectional curvatures are \cite{Petersen2016}:
\begin{equation*}
K_{t\theta} = -\frac{\phi''}{\phi},\qquad 
K_{tz} = -\frac{\psi''}{\psi},\qquad 
K_{\theta z} = -\frac{\phi'\psi'}{\phi\psi} \,.
\end{equation*}

Using the explicit expressions we compute the required derivatives:
\begin{align*}
\phi'(t) &= \frac{1}{\left(1+(kt/2)^2\right)^{3/2}} \,, &
\phi''(t) &= -\frac{3k^2 t}{4}\,\frac{1}{\left(1+(kt/2)^2\right)^{5/2}} \,, \\[5pt]
\psi'(t) &= -\frac{k^2 t}{4}\,\frac{1}{\left(1+(kt/2)^2\right)^{3/2}} \,, &
\psi''(t) &= - \frac{k^2}{4}\,\frac{1-2(kt/2)^2}{\left(1+(kt/2)^2\right)^{5/2}} \,.
\end{align*}

Substituting these into the formulas for the sectional curvatures gives the exact expressions
\begin{equation}\label{eq:sectionalcurvaturesC0}
K_{t\theta} = \frac{3k^{2}}{4} \,\frac{1}{\left(1+(kt/2)^{2}\right)^{2}} \,, \qquad K_{tz} = \frac{k^{2}}{4} \,\frac{1-2(kt/2)^{2}}{\left(1+(kt/2)^{2}\right)^{2}} \,, \qquad K_{\theta z} = \frac{k^{2}}{4} \,\frac{1}{\left(1+(kt/2)^{2}\right)^{2}} \,.
\end{equation}

Their asymptotic expansions for large $t$ are
\begin{equation*}
K_{t\theta} \sim \frac{12}{k^{2}t^{4}} \,, \qquad
K_{tz} \sim - \frac{2}{t^{2}} \,, \qquad
K_{\theta z} \sim \frac{4}{k^{2}t^{4}} \,.
\end{equation*}

Thus the geometry exhibits \emph{mixed} sectional curvatures.  
The curvatures $K_{t\theta}$ and $K_{\theta z}$ are positive and decay like $t^{-4}$; the mixed curvature $K_{tz}$ becomes negative for $t>\sqrt{2}/k$ and dominates the asymptotic behaviour with the slower decay rate $t^{-2}$. This contrasts sharply with the positively curved $\mathrm{SO}(3)$-symmetric solitons of Podestà--Raffero \cite{PodestaRaffero2025}.

Finally, summing the sectional curvatures gives
\begin{equation*}
s^g = 2\left(K_{t\theta}+K_{tz}+K_{\theta z}\right) = \frac{k^{2}}{2} \,\frac{5-2(kt/2)^{2}}{\left(1+(kt/2)^{2}\right)^{2}} \sim -\frac{4}{t^{2}}\qquad (t\to\infty) \,.
\end{equation*}

Hence the scalar curvature is negative at infinity (in contrast with its positive value at $t=0$) and decays quadratically in the geodesic distance.

\begin{remark}
The classical cigar cylinder \cite{Hamilton1988,Hamilton1995} (the product of Hamilton's cigar with $\R$) has a flat axial direction and strictly positive curvatures in the transverse directions, in contrast with our solution with mixed curvatures.
\end{remark}

In the limit $k \to 0$ (corresponding to vanishing $H$-flux), the explicit formulas yield
\begin{equation*}
\lim_{k \to 0} \,\left( \phi_k(t), \psi_k(t), f_k(t) \right) = (t, 1, 0)
\end{equation*}

pointwise for each fixed $t \ge 0$, and consequently
\begin{equation*}
\lim_{k \to 0} \,\left(g_k, H_k, f_k\right) = \left( dt\otimes dt + t^2 \,d\theta\otimes d\theta + dz\otimes dz,\ 0,\ 0 \right) \,,
\end{equation*}

which is the trivial (flat) solution in cylindrical coordinates. Thus, the $\cC=0$ family provides a smooth deformation of the flat metric by a non-trivial $H$-flux.

\subsection{Case $\cC>0$}

When $\cC>0$, the metric functions are
\begin{equation*}
\phi(t)=\frac{2}{\sqrt{\cC}}\,
\frac{\tanh(\sqrt{\cC}\,t/2)}{\sqrt{1 + \frac{k^{2}}{\cC}\tanh^{2}(\sqrt{\cC}\,t/2)}},\qquad
\psi(t)=\frac{1}{\sqrt{1 + \frac{k^{2}}{\cC}\tanh^{2}(\sqrt{\cC}\,t/2)}} .
\end{equation*}

As $t\to\infty$, $\tanh(\sqrt{\cC}t/2)\to 1$, so
\begin{equation*}
\phi_{\infty}:=\lim_{t\to\infty}\phi(t)=\frac{2}{\sqrt{\cC+k^{2}}},\qquad
\psi_{\infty}:=\lim_{t\to\infty}\psi(t)=\frac{\sqrt{\cC}}{\sqrt{\cC+k^{2}}}.
\end{equation*}

Both limits are finite and non‑zero. Consequently the principal orbit area stabilizes to
\begin{equation*}
\cS_{\infty}=2\pi\phi_{\infty}\psi_{\infty}= \frac{4\pi\sqrt{\cC}}{\cC+k^{2}} \,,
\end{equation*}

and the volume of a slice $\{0\le z\le 1\}$ grows linearly with $t$. The metric therefore tends to the flat cylinder $\R\times S^{1}_{\phi_{\infty}}\times \R_{\psi_{\infty}}$ exponentially fast (the approach is governed by $1-\tanh(u)\sim 2e^{-2u}$).  

The $H$-flux satisfies $|H|_{g}^{2}=k^{2}e^{2f}$, and from Proposition \ref{lemma:ftau} one sees that $e^{f}=y(\tau)\to 0$ as $t\to\infty$ (because $\tau\to\tau_{\max}^{-}$ where $y$ vanishes). In fact, using the explicit relation between $t$ and $\tau$ one finds that $e^{f}$ decays exponentially in $t$, hence $|H|_{g}^{2}$ decays exponentially as well.

For the sectional curvatures we use the same formulas in terms of $\phi,\psi$. Substituting the explicit expressions for $\phi(t),\psi(t)$ and letting $t\to\infty$ yields
\begin{equation*}
K_{t\theta},\,K_{tz},\,K_{\theta z}= \mathcal{O}\left(e^{-\sqrt{\cC}t}\right),
\qquad
s^{g}= \mathcal{O}\left(e^{-\sqrt{\cC}t}\right).
\end{equation*}

Thus the geometry is asymptotically flat, with all curvatures decaying exponentially, in striking contrast with the power‑law decay of the $\cC=0$ case. This completes the proof of Corollary \ref{cor:cuspend}.

\begin{remark}
The two regimes $\cC=0$ and $\cC>0$ are continuously connected: as $\cC\to 0^{+}$, the constant radii blow up ($\phi_{\infty}\sim 2/\sqrt{\cC}$, $\psi_{\infty}\to 1$) while the exponential decay rates tend to zero, recovering the power‑law behaviour of the $\cC=0$ soliton. This is consistent with the explicit formulas of Proposition \ref{prop:explicitphipsift}.
\end{remark}

\begin{remark}
\label{rem:nonisometric}
The solitons in Theorem \ref{thm:main} are pairwise non‑isometric. The $z$-axis ($t=0$) is exactly the zero‑set of the $\SO(2)$‑Killing field $\partial_\theta$, hence it is an invariant subset for every isometry. Consequently, the scalar curvature at the origin $s^g(0)$ is a global isometry invariant. For $\cC=0$ one computes $s^g(0)=\frac52k^2$, so different values of $k$ give non‑isometric metrics. For $\cC>0$ one obtains $s^g(0)=\cC+\frac52k^2$ and the asymptotic cylinder ratio $\psi_\infty/\phi_\infty=\frac12\sqrt{\cC}$, so $\cC=4(\psi_\infty/\phi_\infty)^2$ is also an invariant. Varying $(k,\cC)$ therefore changes either $s^g(0)$ or the asymptotic ratio, and the metrics cannot be isometric. (The normalization $\psi(0)=1$, $f(0)=0$ already factors out homotheties and shifts of $f$, so the parameters are genuine isometry invariants.) Finally, the $\cC=0$ and $\cC>0$ families have qualitatively different asymptotics (power‑law vs.\ exponential decay of curvature), so they are never isometric.
\end{remark}

Together with the $\cC=0$ family, these $\cC>0$ solutions provide the complete set of smooth, complete, diagonal cylindrically symmetric steady gradient generalized Ricci solitons. They enlarge the class of known gradient generalized Ricci solitons and exhibit a richer asymptotic structure than previously observed.


\begin{acknowledgements}
This work was supported by the UNED-Santander 2024 Predoctoral Fellowship through the Santander Open Academy. The author would like to thank F. Podestà, A. Raffero, and J. Streets for useful discussions during the review process. The author would also like to thank his supervisor, Carlos Shahbazi, for proposing the project and for his continuous support and guidance.
\end{acknowledgements}

\end{document}